\definecolor{fgreen}{RGB}{44,144, 14}
\renewenvironment{proof}{{\bfseries Proof.}}{\qed}
\numberwithin{equation}{section} 
\newtheorem{theorem}{Theorem}[section] 
\newtheorem{proposition}[theorem]{Proposition} 
\newtheorem{lemma}[theorem]{Lemma} 
\theoremstyle{definition}
\newtheorem{definition}[theorem]{Definition} 
\newtheorem{remark}[theorem]{Remark} 
\newtheorem{example}[theorem]{Example}
\def\O{\mathbb O}
\def\R{\mathbb R}
\def\C{\mathbb C}
\def\F{\mathbb F}
\def\H{\mathbb H}
\def\ib{\mathbf {i}}
\def\jb{\mathbf {j}}
\def\kb{\mathbf {k}}
\def\d{\mathbf{ d}}
\def\N{\mathbb N}
\def\E{\mathbb E}
\def\R{\mathbb R}
\def\d{\mathcal D}
\newcommand{\SL}{\mathrm{SL}}
\newcommand{\GL}{\mathrm{GL}}
\def\P{\mathbb P}
\def\d{\mathcal D}
\def\R{\mathbb {R}}
\def\C{\mathbb {C}}
\def\N{\mathbb {N}}
\def\H{\mathbb {H}}
\def\O{\mathbb {O}}
\def\E{\mathbb {E}}
\def\F{\mathbb {F}}
\def\d{\mathbf {d}}
\def\ib{\mathbf {i}}
\def\jb{\mathbf {j}}
\def\GL{\rm GL}
\def\SL{\rm SL}
\newcommand{\defref}[1]{Definition~\ref{#1}}
\newcommand{\thmref}[1]{Theorem~\ref{#1}}
\newcommand{\lemref}[1]{Lemma~\ref{#1}}
\newcommand{\remref}[1]{Remark~\ref{#1}}
\newcommand{\propref}[1]{Proposition~\ref{#1}}
\begin{document}

\title[Reversibility in $\mathrm{SL}(n,\H) \,$ and $\,\mathrm{PSL}(n,\H)$]{Product of two involutions in 
	 quaternionic special linear group}
 \author[K. Gongopadhyay,    T. Lohan and  C. Maity]{Krishnendu Gongopadhyay, 
 Tejbir Lohan and Chandan Maity}

\address{Indian Institute of Science Education and Research (IISER) Mohali,
 Knowledge City,  Sector 81, S.A.S. Nagar 140306, Punjab, India}
\email{krishnendu@iisermohali.ac.in, krishnendug@gmail.com}

\address{Indian Institute of Technology Kanpur, Kanpur 208016, Uttar Pradesh, India}
\email{tejbirlohan70@gmail.com, tejbir@iitk.ac.in}

\address{Indian Institute of Science Education and Research (IISER) Berhampur, 
	Berhampur  760003, Odisha, India}
\email{cmaity@iiserbpr.ac.in}

\makeatletter
\@namedef{subjclassname@2020}{\textup{2020} Mathematics Subject Classification}
\makeatother

\subjclass[2020]{Primary: 20E45, 15B33;      Secondary:  15A21, 20H25}

\keywords{ Reversible elements, strongly reversible elements, quaternionic special linear group, Weyr canonical form, reversing symmetry group}


\begin{abstract} 
An element of a group is called reversible if it is conjugate to its own inverse. Reversible elements are closely related to strongly reversible elements, which can be expressed as a product of two involutions. In this paper, 	
we classify the reversible and strongly reversible elements in the quaternionic special linear group $ \mathrm{SL}(n,\mathbb{H})$ and quaternionic projective linear group $ \mathrm{PSL}(n,\mathbb{H})$.  We prove that an element of $ \mathrm{SL}(n,\mathbb{H})$ (resp. $ \mathrm{PSL}(n,\mathbb{H})$) is reversible if and only if it is a product of two skew-involutions (resp. involutions). 
\end{abstract}

\maketitle 
\setcounter{tocdepth}{1}

\section{Introduction} 
Symmetry is a widely studied concept in various areas of science where dynamical systems or geometric automorphisms are involved. Symmetries are formulated mathematically using the notion of groups.  
Let $G$ be a group. An element $\iota$ of order at most two in $G$ is called an involution, i.e., $\iota^2=e$. It is a problem of potential applications in several areas of mathematics to decompose an element into a product of involutions; see \cite{Wo, Dj, HP,  El, BM,  RV}. Particularly interesting elements are those that are products of two involutions. In the literature,  an element $g \in G$ is called \textit{reversible} or \textit{real} if $g$ is conjugate to $g^{-1}$ in $G$.   An element $g \in G$ is called \textit{strongly reversible} or \textit{strongly real} if $g$ is conjugate to $g^{-1}$ by an involution in $G$. Equivalently, an element $g \in G$ is strongly reversible if and only if it can be written as a product of two involutions in $G$; see \cite{OS}. An element that conjugates $g$ to $g^{-1}$ is also known as `reversing symmetry' of $g$; see \cite{LR, BR}.  We refer to \cite{Ba} for a brief survey on symmetries and reversing symmetries in dynamical systems.

By reversibility in a group $G$, we mean a classification of reversible and strongly reversible elements in $G$. 
The reversible and strongly reversible elements appear in various areas of mathematics, such as classical dynamics,  mechanics, group theory, representation theory, geometry, complex analysis, and functional equations; see \cite{AA, LR, TZ,  O'Fa}. Classifying the reversible and strongly reversible elements in a group has been a problem of broad interest; see the monograph \cite{OS} for a survey on this theme. In spite of many works, complete and concrete classifications of reversible and strongly reversible elements are not known for many families of groups. 
The aim of this article is to investigate reversibility in the group ${\rm SL}(n,  \H)$,  $n \times n$ matrices over the skew-field $\H$ of Hamilton's quaternions with quaternionic determinant one. 

The group of projective automorphisms of the $n$-dimensional quaternionic projective space $ \P_{\H}^n$ is  the projective linear group ${\rm PSL}(n, \H)= \mathrm{SL}(n,\H)/\{\pm \mathrm{I}_{n}\}$.  When $n=2$, this is related to the geometry of the five-dimensional hyperbolic space or the M\"obius geometry of the four-dimensional sphere; see \cite{Go1, PS}. Reversibility is a natural problem in this context from a geometric point of view. Using the action of ${\rm SL}(2, \H)$ on  $ \P_{\H}^1$ by M\"obius transformations,   Lávička, O'Farrell, and Short classified the reversible and strongly reversible elements in ${\rm PSL}(2, \H)$ in \cite{LOS}. This work was generalized to the M\"obius group ${\rm SO}_o(n,1)$ independently in the works \cite{Go2} and \cite{Sh1}, also see \cite{BM}. However, not much has been investigated for the problem in ${\rm SL}(n, \H)$. On the other hand, reversibility in ${\rm SL}(n, {\rm \F})$, where ${\rm \F}$ is a field, has been investigated by many authors,  e.g., \cite{Wo, ST,  GS, GM}, and also see  \cite[p. 77]{OS}. In spite of these works, a complete list of the reversible and strongly reversible elements in ${\rm SL}(n, \C)$ has been obtained only recently by the authors in \cite{GLM2}.

It is a natural question to ask for reversibility in ${\rm SL}(n, \H)$ and ${\rm PSL}(n, \H)$ generalizing these works. In \cite{DGL}, reversible and strongly reversible elements in ${\rm PSL}(3, \H)$ have been classified. They were further applied to provide an algebraic characterization of the dynamical types of ${\rm PSL}(3, \H)$, which generalizes the results in the context of ${\rm PSL}(2, \H)$, e.g., in \cite{Go1, PS}.

In this paper, we solve the reversibility problem for the group ${\rm SL}(n, \H)$ for arbitrary $n$. The following theorem extends  \cite[Theorem 1.1]{DGL} and also the work \cite{LOS}. 

\begin{theorem}\label{thm-main-equiv-PSL(n,H)}
	An element of $ \mathrm{PSL}(n,\mathbb{H})$ is reversible if and only if it is strongly reversible.
\end{theorem}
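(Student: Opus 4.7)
\emph{Plan.} The forward implication, strongly reversible $\Rightarrow$ reversible, holds in any group: if $[g] = [\iota_1][\iota_2]$ with each $[\iota_i]^2 = [I_n]$, then $[\iota_1]$ is itself an involution and conjugates $[g]$ to $[\iota_2][\iota_1] = [g]^{-1}$. The substance of the theorem is the converse, and the plan is to lift the problem to $\mathrm{SL}(n,\H)$ and exploit the companion classification (announced in the abstract and established earlier in the paper) which states that an element of $\mathrm{SL}(n,\H)$ is reversible if and only if it is a product of two skew-involutions (i.e.\ elements $s$ with $s^2 = -I_n$).

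Let $[g] \in \mathrm{PSL}(n,\H)$ be reversible, and choose lifts $g, h \in \mathrm{SL}(n,\H)$; then $hgh^{-1} = \epsilon\, g^{-1}$ for some $\epsilon \in \{+1,-1\}$, the sign encoding the two possible lifts of $[g]^{-1}$. If $\epsilon = +1$, then $g$ is reversible in $\mathrm{SL}(n,\H)$, and the companion theorem yields $g = s_1 s_2$ with $s_i^2 = -I_n$; projecting, each $[s_i]$ is an involution in $\mathrm{PSL}(n,\H)$, so $[g] = [s_1][s_2]$ is strongly reversible. The substantive case is $\epsilon = -1$: here $g$ need not be reversible in $\mathrm{SL}(n,\H)$ at all (for instance, a Weyr block $J(bi,k)$ with $b > 0$ and $b \neq 1$ yields $g \sim_{\mathrm{SL}} -g^{-1}$ while $g \not\sim_{\mathrm{SL}} g^{-1}$), so the $\mathrm{SL}$ theorem cannot be invoked on $g$ as a black box.

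For $\epsilon = -1$ the plan is a block-wise construction using the $\H$-Weyr normal form. The relation $g \sim_{\mathrm{SL}} -g^{-1}$ forces the multiset of quaternionic eigenvalue classes of $g$ (each chosen in the closed upper complex half-plane) to be invariant under the involution $\lambda \mapsto -\lambda^{-1}$, whose unique fixed point is $\lambda = i$. Accordingly the Weyr blocks of $g$ split into paired blocks $\{J(\lambda,k),\, J(-\lambda^{-1},k)\}$ and possibly self-paired blocks at $\lambda = i$. On each paired summand I would use the anti-diagonal ansatz $\tau = \bigl(\begin{smallmatrix} 0 & A \\ A^{-1} & 0 \end{smallmatrix}\bigr)$, which automatically satisfies $\tau^2 = I$; the block $A$ is chosen so that $A\, J(-\lambda^{-1},k)\, A^{-1} = -J(\lambda,k)^{-1}$, which is possible because the two sides share the same quaternionic conjugacy invariants, and this forces $\tau g \tau^{-1} = -g^{-1}$ on that summand. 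On each self-paired block at $\lambda = i$, the classes of $i$ and $-i$ coincide in $\H$, so the restriction of $g$ is already reversible in $\mathrm{SL}$ and the companion theorem supplies a skew-involution factorization there. Gluing these block constructions yields $\iota \in \mathrm{SL}(n,\H)$ with $\iota^2 \in \{\pm I_n\}$ and $\iota g \iota^{-1} \in \{\pm g^{-1}\}$; its image $[\iota]$ is an involution in $\mathrm{PSL}(n,\H)$ reversing $[g]$, so $[g]$ is strongly reversible.

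The main obstacle is entirely the $\epsilon = -1$ case, and within it the sign-bookkeeping required to assemble the block data into a single global $\iota$: paired blocks contribute an involution-type piece conjugating to $-g^{-1}$, while self-paired blocks contribute a skew-involution-type piece conjugating to $+g^{-1}$. The central $\{\pm I_n\}$-ambiguity of the quotient $\mathrm{SL}(n,\H) \to \mathrm{PSL}(n,\H)$ gives exactly the freedom needed to synchronize the sign of $\iota^2$ and the sign of $\iota g \iota^{-1}$ across blocks without altering the class $[\iota]$, and verifying this synchronization is the technical heart of the argument.
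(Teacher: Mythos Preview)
Your handling of the case $\epsilon=+1$ and of the paired blocks $\{J(\lambda,k),J(-\lambda^{-1},k)\}$ in the case $\epsilon=-1$ matches the paper's approach exactly. The genuine gap is in the self-paired blocks at $\lambda=\ib$ and, more precisely, in the gluing step you flag as ``the technical heart.'' Your plan on a block $J(\ib,k)$ is to invoke ${\rm SL}$-reversibility and the companion theorem to produce a skew-involution $\tau_2$ with $\tau_2 J(\ib,k)\tau_2^{-1}=J(\ib,k)^{-1}$; this yields the sign pattern $(\tau_2^2,\ \tau_2 g_2\tau_2^{-1})=(-I,\ +g_2^{-1})$, whereas on paired blocks your anti-diagonal $\tau_1$ gives $(+I,\ -g_1^{-1})$ (or $(-I,\ -g_1^{-1})$ if you flip a sign). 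When both types of block are present, the direct sum $\iota=\tau_1\oplus\tau_2$ satisfies neither $\iota^2=\pm I_n$ nor $\iota g\iota^{-1}=\pm g^{-1}$, and the central $\{\pm I_n\}$-ambiguity cannot fix this: replacing $\iota$ by $-\iota$ or $g$ by $-g$ acts \emph{globally} and leaves the block-by-block sign discrepancy intact. Nor can you fall back on ${\rm SL}$-reversibility of the full $g$: for instance $g=J(2,1)\oplus J(-\tfrac12,1)\oplus J(\ib,1)$ satisfies $g\sim_{\rm SL}-g^{-1}$ but is not reversible in ${\rm SL}(3,\H)$, since $[2]$ has no partner $[1/2]$.

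The paper closes exactly this gap with a direct construction (its Lemma~5.5, generalising the explicit $5\times 5$ Example~5.4): for each $n$ there is an \emph{involution} $h\in{\rm SL}(n,\H)$---in fact an upper-triangular matrix in ${\rm GL}(n,\C)$ with entries built from binomial coefficients---such that $h\,J(\ib,n)\,h^{-1}=-J(\ib,n)^{-1}$. This puts the self-paired blocks on the same footing $(+I,\ -g^{-1})$ as the paired ones, after which the gluing is immediate and one obtains a global involution $h$ with $hgh^{-1}=-g^{-1}$; writing $g=(-h^{-1}g^{-1})\,h$ then exhibits $g$ as (skew-involution)$\times$(involution), hence $[g]$ as a product of two involutions in ${\rm PSL}(n,\H)$. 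Your outline becomes correct once you replace the appeal to the companion theorem on $J(\ib,k)$ by this explicit involution; without it, the synchronisation you promise does not go through.
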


Note that every element $[g]$ in ${\rm PSL}(n, \H)$ has two lifts, $g$ and $-g$, in ${\rm SL}(n, \H)$. Therefore, an element $[g]  \in {\rm PSL}(n, \H)$ is conjugate to its inverse if and only if either $g$ is conjugate to $g^{-1}$ or $g$ is conjugate to $-g^{-1} $ in ${\rm SL}(n, \H)$. To classify reversible (resp. strongly reversible) elements in ${\rm PSL}(n, \H)$, we first need to classify reversible (resp. strongly reversible) elements in ${\rm SL}(n, \H)$. However, this alone does not completely classify these elements in ${\rm PSL}(n, \H)$. Following \cite{DGL}, to obtain a complete classification of reversible (resp. strongly reversible) elements in ${\rm PSL}(n, \H)$,  investigation of the following equation is required:  $hgh^{-1}=-g^{-1}$, where $g, h \in {\rm SL}(n, \H)$. 

We investigated the reversibility in the group $\mathrm{GL}(n,\H)$ in \cite{GLM1}. Note that if two matrices are conjugate by an element of  $\mathrm{GL}(n,\H)$, then by a suitable scaling of the conjugating element,  we can assume that both the matrices are conjugate by an element of  $ \mathrm{SL}(n,\H)$, cf. \cite[Remark 2.4]{DGL}. Therefore,  the classification of reversible elements in $ {\rm SL }(n,\H)$ reduces to that of  $ {\rm GL }(n,\H)$ and is given by the following result. 
We refer to \lemref{lem-Jordan-M(n,H)} for the Jordan decomposition of quaternionic matrices.

\begin{theorem} [{\cite[Theorem 5.1]{GLM1}}] \label{thm-rev-SL(n,H)} 
	An element $A \in  {\rm SL }(n,\H)$   is reversible if and only if the Jordan blocks in the Jordan decomposition of $A$ can be partitioned into pairs $ \{ \mathrm{J}(\lambda, s),\mathrm{J}(\lambda^{-1}, s)\} $ or singletons $\{\mathrm{J}(\mu, t  )\}$,  where $\lambda, \mu \in \C \setminus \{0\}$  with non-negative imaginary parts such that  $|\lambda| \neq 1$ and $  |\mu| = 1$.  
\end{theorem}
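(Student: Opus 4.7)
The plan is to reduce reversibility in $\mathrm{SL}(n,\H)$ to an equivalence between the Jordan data of $A$ and of $A^{-1}$, and then to realize the required conjugations block by block.

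First, I would use the remark about scaling to pass freely between $\mathrm{GL}(n,\H)$-conjugacy and $\mathrm{SL}(n,\H)$-conjugacy: if $gAg^{-1}=A^{-1}$ with $g\in\mathrm{GL}(n,\H)$, then $g$ can be rescaled by a positive real to have Dieudonn\'e determinant $1$, which does not affect the conjugation. Thus $A$ is reversible in $\mathrm{SL}(n,\H)$ iff $A$ and $A^{-1}$ have the same quaternionic Jordan form, in the canonical convention from \lemref{lem-Jordan-M(n,H)} where each eigenvalue is chosen in $\C$ with non-negative imaginary part.

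Next I would analyze the Jordan form of $A^{-1}$. A direct calculation over $\C$ shows that $\mathrm{J}(\lambda,s)^{-1}$ is similar to $\mathrm{J}(\lambda^{-1},s)$. To return to the non-negative imaginary part convention, I would use the elementary identity $j\,\alpha\,j^{-1}=\bar\alpha$ for $\alpha\in\C\subset\H$, which implies $\mathrm{J}(\lambda^{-1},s)$ is $\mathrm{GL}(n,\H)$-conjugate to $\mathrm{J}(\overline{\lambda^{-1}},s)$ whenever needed. Consequently the map sending each block $\mathrm{J}(\lambda,s)$ of $A$ to the canonical representative of its inverse is an involution on the multiset of blocks; its orbits are exactly singletons $\{\mathrm{J}(\mu,t)\}$ with $|\mu|=1$ (the fixed points) and pairs $\{\mathrm{J}(\lambda,s),\mathrm{J}(\lambda^{-1},s)\}$ with $|\lambda|\neq 1$. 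The forward direction of the theorem follows: if $A$ and $A^{-1}$ share the same Jordan form, the multiset must be invariant under this involution, hence admits the claimed partition.

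For the converse I would produce an explicit reversing element on each orbit and then direct-sum. For a pair, on the $2s$-dimensional block $\mathrm{diag}(\mathrm{J}(\lambda,s),\mathrm{J}(\lambda^{-1},s))$, swapping the two summands and postcomposing with the $\C$-similarity $\mathrm{J}(\lambda,s)^{-1}\sim \mathrm{J}(\lambda^{-1},s)$ gives a matrix conjugating the block to its inverse. For a singleton $\mathrm{J}(\mu,t)$ with $\mu=e^{i\theta}$ and $\mu^{-1}=\bar\mu$, conjugation by $j\cdot\mathrm{I}_t$ sends $\mathrm{J}(\mu,t)$ to $\mathrm{J}(\bar\mu,t)=\mathrm{J}(\mu^{-1},t)$, which is in turn $\C$-similar to $\mathrm{J}(\mu,t)^{-1}$; composing these yields the required reversing element.

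The main obstacle I anticipate is the bookkeeping around the non-negative imaginary part convention: one must check carefully that the involution on Jordan data is well defined, that the $\C$-level similarities between $\mathrm{J}(\lambda,s)^{-1}$ and $\mathrm{J}(\lambda^{-1},s)$ can be patched with the quaternionic trick $j\alpha j^{-1}=\bar\alpha$ without disturbing block sizes, and that the assembled conjugator preserves $\mathrm{SL}$. Once these verifications are set up cleanly the classification follows by reading the Jordan form.
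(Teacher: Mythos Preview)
Your outline is sound, but note that the paper does not actually prove \thmref{thm-rev-SL(n,H)}: it is quoted verbatim from \cite[Theorem~5.1]{GLM1}, and only the surrounding remarks (scaling from $\mathrm{GL}$ to $\mathrm{SL}$) appear here. So there is no in-paper proof to compare against directly.

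That said, your strategy is exactly the standard one, and the explicit reversing elements you sketch in the converse direction are precisely what the present paper records (for later use) in Table~\ref{table:1} and \lemref{lem-rel-omega-complex}: for the pair $\mathrm{J}(\lambda,s)\oplus\mathrm{J}(\lambda^{-1},s)$ the off-diagonal swap built from the $\C$-similarity $\Omega(\lambda,s)$ between $\mathrm{J}(\lambda,s)^{-1}$ and $\mathrm{J}(\lambda^{-1},s)$ (row~2), and for a singleton $\mathrm{J}(e^{i\theta},t)$ the composite $\Omega(e^{i\theta},t)\,\jb$ combining the $\C$-similarity with $\jb$-conjugation (rows~1 and~3). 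Your ``involution on the multiset of blocks'' argument for the forward direction is the right bookkeeping; just be explicit that the canonical representative of $[\lambda^{-1}]$ is $\lambda/|\lambda|^2$, so the fixed points are exactly $|\lambda|=1$ and for $|\lambda|\neq 1$ equality of multiplicities of $\mathrm{J}(\lambda,s)$ and $\mathrm{J}(\lambda/|\lambda|^2,s)$ is what allows the pairing. The $\mathrm{SL}$-issue you flag is handled exactly as the paper says, by scaling the conjugator by a positive real.
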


Note that if $g \in {\rm GL }(n,\H)$ is an involution, then $ g \in {\rm SL }(n,\H)$; see \cite[Theorem 5.9.2]{rodman}. Therefore, if an element of ${\rm SL }(n,\H)$ is strongly reversible in ${\rm GL }(n,\H)$, then it will also be strongly reversible in ${\rm SL }(n,\H)$. This implies that the classification of strongly reversible elements in ${\rm SL }(n,\H)$ follows from the corresponding classification in  ${\rm GL }(n,\H)$. Unlike the general linear group $\mathrm{GL}(n,\mathbb{F})$ over a field $\F$, not every reversible element of $\mathrm{GL}(n,\mathbb{H})$ is strongly reversible in $\mathrm{GL}(n,\mathbb{H})$; see 
\cite{El}. For example, while $\ib \in \mathrm{GL}(1,\mathbb{H})$ is reversible (because $\jb \ib \jb^{-1} =-\ib$), it is not strongly reversible since  $\{\pm 1 \}$ are only two involutions in  $\mathrm{GL}(1,\mathbb{H})$.   In \cite[Theorem 5.4]{GLM1}, we gave a sufficient criterion for strong reversibility of the reversible elements in $\mathrm{GL}(n,\mathbb{H})$. In this article, we will show that these conditions are also necessary. 
We prove the following result, which classifies the strongly reversible elements in ${\rm SL}(n,\H)$, and hence in ${\rm GL}(n,\H)$. 

\begin{theorem}\label{thm-strong-rev-SL(n,H)}
	Let $A \in {\rm SL}(n,\H)$ be a reversible element. Then $A$ is strongly reversible in ${\rm SL }(n,\H)$ if and only if, in the Jordan decomposition of $A$,  every Jordan block corresponding to the non-real eigenvalue classes of unit modulus has even multiplicity.  
\end{theorem}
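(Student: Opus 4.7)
The sufficiency follows from \cite[Theorem 5.4]{GLM1}, so only the necessity requires new work. Suppose $A \in \mathrm{SL}(n,\mathbb{H})$ is strongly reversible: write $A = \iota_1 \iota_2$ with involutions $\iota_i$, and set $\tau = \iota_1$, so $\tau^2 = I$ and $\tau A \tau^{-1} = A^{-1}$. Decompose $\mathbb{H}^n = \bigoplus_{\mu} V_\mu$ into the generalized eigenspaces of $A$. Since $\tau$ carries $V_\mu$ into $V_{\mu^{-1}}$, and the quaternionic similarity class $[\mu]$ equals $[\mu^{-1}]=[\bar\mu]$ whenever $|\mu|=1$, each such $V_\mu$ is $\tau$-invariant. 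In view of \thmref{thm-rev-SL(n,H)}, we may therefore restrict to one primary component and assume that every Jordan block of $A$ has a common eigenvalue $\mu \in \mathbb{C}$ with $|\mu|=1$ and $\operatorname{Im}\mu>0$.

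We then complexify. Writing $\mathbb{H}^n = \mathbb{C}^n \oplus \mathbb{C}^n \mathbf{j}$, the left-multiplication action of $A$ defines $A_\mathbb{C} \in \mathrm{GL}(2n,\mathbb{C})$ commuting with the conjugate-linear structure $J$ (right multiplication by $\mathbf{j}$, $J^2=-I$); each quaternionic Jordan block $\mathrm{J}(\mu,t)$ splits as the complex pair $\mathrm{J}_\mathbb{C}(\mu,t)\oplus\mathrm{J}_\mathbb{C}(\bar\mu,t)$ for $A_\mathbb{C}$. Pick a basis adapted to $(V_\mu)_\mathbb{C} = V^\mu_\mathbb{C}\oplus V^{\bar\mu}_\mathbb{C}$ in which $J$ swaps the two summands. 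Because $\tau_\mathbb{C}$ must interchange the $\mu$- and $\bar\mu$-generalized eigenspaces of $A_\mathbb{C}$, the involution condition $\tau_\mathbb{C}^2=I$ fixes the block form and we obtain
\[
A_\mathbb{C} = \begin{pmatrix} X & 0 \\ 0 & \bar X \end{pmatrix}, \qquad \tau_\mathbb{C} = \begin{pmatrix} 0 & B \\ B^{-1} & 0 \end{pmatrix},
\]
where $X$ is a complex matrix realizing the Jordan structure of $A|_{V_\mu}$. The remaining two conditions, $J\tau_\mathbb{C}=\tau_\mathbb{C}J$ and $\tau_\mathbb{C} A_\mathbb{C}\tau_\mathbb{C}^{-1}=A_\mathbb{C}^{-1}$, then translate respectively into
\[
B\bar B = -I \qquad \text{and} \qquad B^{-1} X B = \bar X^{-1}.
\]

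The theorem is thereby reduced to a purely complex linear-algebra question: given $X\in\mathrm{GL}(N,\mathbb{C})$ with all Jordan eigenvalues equal to a fixed non-real $\mu$ of unit modulus, for which Jordan structures is there $B\in\mathrm{GL}(N,\mathbb{C})$ satisfying both displayed equations? Since $\bar X^{-1}$ has the same Jordan structure as $X$, the transporter $T=\{B:B^{-1}XB=\bar X^{-1}\}$ is a non-empty coset $Z(X)B_0$ of the centralizer, and we need $T$ to meet the fixed-point locus of the involution $B\mapsto -\bar B^{-1}$ on $\mathrm{GL}(N,\mathbb{C})$. The plan is to pass to the Weyr canonical form of $X$: the centralizer $Z(X)$ and the transporter $T$ then acquire a transparent block structure indexed by the block-size multiplicities $m_t$, and the condition $B\bar B=-I$ descends, for each size $t$, to an analogous equation $Q\bar Q=-I$ in $\mathrm{GL}(m_t,\mathbb{C})$. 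The scalar baseline $t=1$ (where $X=\mu I_{m_1}$) sets the pattern: taking determinants forces $|\det Q|^2=(-1)^{m_1}$, so a solution exists if and only if $m_1$ is even, precisely matching the sufficiency construction of \cite[Theorem 5.4]{GLM1}. The main obstacle is propagating this dichotomy coherently through the higher Weyr strata; we plan to handle it by induction along the Weyr staircase, exploiting at each stage the $Z(X)$-equivariance of both equations to reduce back to the scalar baseline.
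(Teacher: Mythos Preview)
Your approach is correct and closely parallels the paper's, arriving at the same decisive equation $Q\bar Q=-I$ and the determinant obstruction $|\det Q|^{2}=(-1)^{m}$. The paper, however, avoids your complexification detour: working directly over $\mathbb{H}$, it uses that the centralizer of a complex Jordan block with non-real eigenvalue already lies in $\mathrm{M}(n,\mathbb{C})$ (their Lemma~2.9), so any reverser of $A$ has the form $f\cdot\Omega_W\,\mathbf{j}$ with $f\in\mathrm{GL}(n,\mathbb{C})$ commuting with the Weyr form $A_W$. Your matrices $B$ and $X$ are exactly this $f\,\Omega_W$ and $A_W$ once one identifies right multiplication by $\mathbf{j}$ with your structure map $J$; the two equations you derive are the same ones the paper obtains in one line from $(Q_k\,\mathbf{j})^2=I$.

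More importantly, the ``induction along the Weyr staircase'' you anticipate is unnecessary. In Weyr form both the centralizer and the explicit transporter are block upper triangular, so the reversing involution $h$ is block upper triangular; hence $h^2=I$ forces each diagonal block of $h$ to square to the identity. The paper observes that the \emph{first} Weyr diagonal block $Y_{1,1}$ is itself block upper triangular with diagonal entries $Q_1\mathbf{j},\dots,Q_s\mathbf{j}$ (one for each distinct Jordan size $d_k$ with multiplicity $t_{d_k}$), and reads off $Q_k\bar Q_k=-I$ for every $k$ simultaneously. So the obstruction for \emph{all} block sizes drops out at once from the upper-triangular structure, with no inductive propagation required.
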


It is worth mentioning that \thmref{thm-rev-SL(n,H)} and \thmref{thm-strong-rev-SL(n,H)}, combined with \cite[Theorem 1.1]{GLM3}, provide a complete list of the reversible and strongly reversible elements in the affine group $\mathrm{GL}(n, \H) \ltimes \H^n$.

Recall that an element $g \in {\rm SL}(n,\H)$ is called a  skew-involution if  $ g^2 =-\mathrm{I}_n$. We refer to \cite{PaSa, JP} for the decomposition of matrices over a field as products of skew-involutions. Note that every reversible element of ${\rm SL}(n,\H)$ is not strongly reversible. However, we have the following result.

\begin{theorem}\label{thm-rev-SL-prod-inv-PSL}
	An element of   $ {\rm SL }(n,\H)$ is reversible if and only if it can be written as a product of two skew-involutions in ${\rm SL }(n,\H)$.
\end{theorem}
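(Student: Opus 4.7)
The ``if'' direction is an elementary quaternionic computation. Suppose $A = \sigma_1 \sigma_2$ with $\sigma_i^2 = -\mathrm{I}_n$; then $\sigma_i^{-1} = -\sigma_i$, and
\[
\sigma_1 A \sigma_1^{-1} \;=\; \sigma_1^2 \sigma_2 \sigma_1^{-1} \;=\; (-\mathrm{I}_n)\sigma_2(-\sigma_1) \;=\; \sigma_2 \sigma_1 \;=\; A^{-1},
\]
so $A$ is reversible. For the converse, my plan is to produce a skew-involution $\sigma \in \mathrm{SL}(n,\H)$ with $\sigma A \sigma^{-1} = A^{-1}$. Once such a $\sigma$ is in hand, the consequent identity $A\sigma^{-1} = \sigma^{-1}A^{-1}$ gives $(\sigma^{-1}A)^2 = \sigma^{-1}A\sigma^{-1}A = \sigma^{-2} = -\mathrm{I}_n$, so $\sigma^{-1}A$ is automatically a second skew-involution and $A = \sigma \cdot (\sigma^{-1}A)$ is the required factorisation.

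\textbf{Reduction and the paired case.} By \thmref{thm-rev-SL(n,H)} combined with \lemref{lem-Jordan-M(n,H)}, after $\mathrm{SL}(n,\H)$-conjugation $A$ may be taken in block-diagonal form whose blocks split into pairs $\{\mathrm{J}(\lambda,s),\mathrm{J}(\lambda^{-1},s)\}$ ($|\lambda|\neq 1$) and singletons $\{\mathrm{J}(\mu,t)\}$ ($|\mu|=1$). Assembling block-wise skew-involutions block-diagonally gives a global skew-involution with the required reversing action, so it is enough to treat each block individually. For a pair, take
\[
\sigma_{\mathrm{pair}} \;=\; \begin{pmatrix} 0 & T \\ -T^{-1} & 0 \end{pmatrix},
\]
where $T\in\mathrm{GL}(s,\C)$ is chosen so that $T\mathrm{J}(\lambda^{-1},s)T^{-1} = \mathrm{J}(\lambda,s)^{-1}$ (which exists since both matrices have the same Jordan type). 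A routine $2\times 2$-block calculation gives $\sigma_{\mathrm{pair}}^2 = -\mathrm{I}_{2s}$ and $\sigma_{\mathrm{pair}}\,\mathrm{diag}(\mathrm{J}(\lambda,s),\mathrm{J}(\lambda^{-1},s))\,\sigma_{\mathrm{pair}}^{-1} = \mathrm{diag}(\mathrm{J}(\lambda,s)^{-1},\mathrm{J}(\lambda^{-1},s)^{-1})$.

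\textbf{Singleton case and main obstacle.} For a singleton $\{\mathrm{J}(\mu,t)\}$ with $|\mu|=1$, write $H_0 = j\,\mathrm{I}_t$. Since conjugation by $H_0$ acts entrywise as complex conjugation on complex matrices, $H_0\mathrm{J}(\mu,t)H_0^{-1} = \mathrm{J}(\bar\mu,t) = \mathrm{J}(\mu^{-1},t)$. It therefore suffices to find $S \in \mathrm{GL}(t,\C)$ satisfying (a) $S\,\mathrm{J}(\mu^{-1},t)\,S^{-1} = \mathrm{J}(\mu,t)^{-1}$ and (b) $S\bar S = \mathrm{I}_t$; then (using $H_0 S H_0^{-1} = \bar S$) one computes $(SH_0)^2 = -S\bar S = -\mathrm{I}_t$ and $(SH_0)\mathrm{J}(\mu,t)(SH_0)^{-1} = \mathrm{J}(\mu,t)^{-1}$, so $\sigma_{\mathrm{sing}} := SH_0$ is the desired block-wise skew-involution. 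For $\mu = \pm 1$ one may take $S$ real, reducing (a)--(b) to the classical fact that $\mathrm{J}(\pm 1,t)$ is similar to its inverse via a real involution. The main difficulty is the case of non-real unit-modulus $\mu$, where (a) and (b) must be reconciled inside $\mathrm{GL}(t,\C)$. I plan to dispatch this case by an upper-triangular ansatz for $S$ with diagonal entries $S_{kk} = (-\mu^{-2})^{t-k}$ (forced by matching the nilpotent parts of $\mathrm{J}(\mu^{-1},t) - \mu^{-1}\mathrm{I}_t$ and $\mathrm{J}(\mu,t)^{-1} - \mu^{-1}\mathrm{I}_t$), solving the remaining superdiagonal relations imposed by (a) linearly, and verifying (b) using $\overline{\mu^{-k}} = \mu^{k}$; hand computation handles $t \leq 3$, and the general case should follow by induction on $t$.
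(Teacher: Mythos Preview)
Your plan is correct and mirrors the paper's proof: the paper also reduces to block-wise skew-involutory reversers, taking $T = \Omega(\lambda,s)$ for the paired blocks (\lemref{lem-prod-skew-inv-non-unit-mod}) and $\sigma_{\mathrm{sing}} = \Omega(\mu,t)\,\jb$ for the unit-modulus singletons (\lemref{lem-prod-skew-inv-unit-mod}). The upper-triangular matrix $S$ you propose to build by induction is exactly $\Omega(\mu,t)$ of \defref{def-special-matrix-omega}---its diagonal entries are indeed $(-\mu^{-2})^{t-k}$, and \lemref{lem-rel-omega-complex} supplies both your conditions (a) and (b) (the latter via $\overline{\Omega(\mu,t)} = \Omega(\mu^{-1},t) = \Omega(\mu,t)^{-1}$ when $|\mu|=1$), so no separate induction is needed and your real/non-real split can be dropped.
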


In this paper, our approach uses the notion of  \textit{Weyr canonical form} of a matrix (see \cite{Sh, COV}), 
and  the  structure of  the  \textit{reversing symmetry group} or \textit{extended centralizer}  $\mathcal{E}_{\mathrm{GL}(n,\mathbb{H})}(A) := \{ g \in \mathrm{GL}(n,\mathbb{H}) \mid gAg^{-1} = A \hbox{ or } gAg^{-1} = A^{-1} \}$ for  $A \in \mathrm{GL}(n,\mathbb{H})$;   see  \cite{BR}, also  \cite[Section 2.1.4]{OS}.  
We classified reversibility in the group  ${\rm SL}(n,\C)$ in \cite{GLM2} and also described reversing symmetries for certain types of Jordan forms in ${\rm GL}(n,\C)$; see \cite[Table 1]{GLM2}. 
In this article,  an analog of \cite[Table 1]{GLM2} in the context of ${\rm GL}(n,\H)$ has been described in  Table \ref{table:1}; see Section \ref{sec-proof-table} for a proof of Table \ref{table:1}. We refer to \defref{def-jordan} and \defref{def-special-matrix-omega} for the notations used in Table \ref{table:1}. Note that for a reversible element, the reversing symmetry group is a degree two extension of the centralizer,  and hence Table \ref{table:1} along with the Jordan decompositions provide us the structure of the reversing symmetry group for the reversible elements in ${\rm SL}(n,\H)$.

\begin{table}[H]	
	\centering{
		\caption{Reversing symmetries for Jordan forms in $ {\SL}(n,\H)$}
		\begin{tabular}{|p{1.5cm}|p{7cm}|p{6cm}|}
			\hline
			\vspace{0.05cm} 	\textbf{ Sr No.  } &\vspace{0.05cm}  Jordan form  (A)&  \vspace{0.05cm}  Reversing element (g) \\
			\hline
			\vspace{0.05cm} \hspace{.44cm}	1 & \vspace{0.005cm} 	$\mathrm{J}(\mu,  n)$,  $\mu \in \{ \pm 1 \}$.  &\vspace{0.005cm}  $\Omega(\mu,  n)$ \vspace{0.15cm} \\
			\hline
			\vspace{.33cm}	\hspace{.55cm}2 &    \vspace{0.005cm}  $ \left( \begin{array}{cc}    \mathrm{J}(\lambda, n)  &  \\  & \mathrm{J}(\lambda^{-1}, n)  \end{array}\right) $,   \vspace{0.15cm} 	\newline $ \lambda \in \C \setminus \{ 0\} $, $\mathrm{Im}(\lambda) \geq 0$, $ |\lambda| \neq 1$.  \vspace{0.005cm} &  \vspace{.005cm} $ \left( \begin{array}{cc}      & \Omega(\lambda,  n) \\
				\Big(	\Omega(\lambda,  n) \Big)^{-1} &   \end{array}\right) $ \vspace{.2cm} \\ 
			\hline
			\vspace{0.05cm} \hspace{.44cm}	3 &\vspace{0.005cm} 	$\mathrm{J}(\alpha,   n)$,  $ \alpha \in \C , \mathrm{Im}(\alpha) >0$,  $ |\alpha|= 1 $.  &\vspace{0.005cm}  $\Omega(\alpha,  n) \, \jb$ \vspace{0.15cm} \\
			\hline
			\vspace{0.05cm} \hspace{.44cm}	4 & {  \vspace{0.0001cm} } $  \left( \begin{array}{cc}    \mathrm{J}(\alpha,   n)  &  \\  & \mathrm{J}(\alpha,  n)  \end{array}\right)$,  \vspace{0.15cm} 
			\newline	$ \alpha \in \C , \mathrm{Im}(\alpha) >0$,  $ |\alpha|= 1 $. \vspace{0.15cm}&   \vspace{0.05cm} $ \left( \begin{array}{cc}      &  \Omega(\alpha,  n) \,  \jb \\
				\Big( \Omega(\alpha,  n) \,  \jb \Big)^{-1} &   \end{array}\right)$  \vspace{0.05cm} \\ 
			\hline
		\end{tabular}
		\label{table:1}}
\end{table}

\textbf{Structure of the paper.}
In Section \ref{sec-prelim}, we establish notations and recall some preliminary results. 
\thmref{thm-strong-rev-SL(n,H)} is proved  in Section \ref{sec-str-rev-SL(n,H)} which classifies the strongly reversible elements in $\mathrm{SL}(n,\mathbb{H})$. 
Section \ref{sec-prod-ske-inv} deals with products of two skew-involutions in $\mathrm{SL}(n,\mathbb{H})$  and we  prove \thmref{thm-rev-SL-prod-inv-PSL}.   Finally, we investigate reversibility in  $\mathrm{PSL}(n,\mathbb{H})$ and prove  \thmref{thm-main-equiv-PSL(n,H)} in Section \ref{sec-rev-PSL(n,H)}.

\section{Preliminaries} \label{sec-prelim}
In this section, we fix some notation and recall some necessary background that will be used throughout this paper. Let $\H:= \R + \R \ib + \R \jb + \R \kb$ denote the division algebra of Hamilton’s quaternions, where $\ib^2=\jb^2=\kb^2=\ib  \jb   \kb = -1$. We consider $\H^n$ as a right $\H$-module. We refer to  \cite{rodman} for an elaborate discussion on the linear algebra over the quaternions.

\begin{definition}\label{def-eigen-M(n,H)}
	Let $A \in  \mathrm{M}(n,\H)$, the algebra of $n \times n$ matrices over $\H$.  A non-zero vector $v \in \H^n $ is said to be a (right) eigenvector of $A$ corresponding to a  (right) eigenvalue  $\lambda \in \H $ if the equality $ A v = v\lambda $ holds.
\end{definition}

Eigenvalues of every matrix over quaternions occur in similarity classes, and each similarity class of eigenvalues contains a unique complex number with non-negative imaginary part. Here, instead of similarity classes of eigenvalues, we will consider the \textit{unique complex representatives} with non-negative imaginary parts as eigenvalues unless specified otherwise. In places where we need to distinguish between the similarity class and a representative, we shall write the similarity class of an eigenvalue representative $\lambda$ by $[\lambda]$.

\begin{definition}[{\cite[p.  94]{rodman}}] \label{def-jordan}
	A {\it Jordan block} $\mathrm{J}(\lambda,m)$ is an $m \times m$ $(m>1)$ matrix  with $ \lambda \in \H$ on the diagonal entries,  $1$ on all of the super-diagonal entries and $0$ elsewhere. For $m=1$,  $\mathrm{J}(\lambda,1) \,:=\, (\lambda)$.
	We will refer to a block diagonal matrix where each block is a Jordan block as  \textit{Jordan form}.  
\end{definition}

\begin{definition}[{\cite[p. 113]{rodman}}] \label{def-det-H}
	Let $A \in \mathrm{M}(n,\H)$. Write $ A = A_1 + A_2 \jb $,  where $ A_1,  A_2 \in \mathrm{M}(n,\C)$.  Consider the embedding $ \Phi:  \mathrm{M}(n,\H)  \longrightarrow  \mathrm{M}(2n,\C)$ defined by
	\begin{equation}\label{eq-embedding-phi}
		\Phi(A) = \begin{psmallmatrix} A_1   &  A_2 \\
			- \overline{A_2} & \overline{A_1}  \\ 
		\end{psmallmatrix}\,, 
	\end{equation}
	where each $ \overline{A_i} $ denotes the complex conjugate of $ A_i$.	
	The determinant of $A \in  \mathrm{M}(n,\H)$ is  defined as 
	$ {\rm det}(A):=  {\rm det}(\Phi(A))$.  In view of the \textit{Skolem-Noether theorem}, the above definition of the quaternionic determinant is independent of the choice of embedding $ \Phi$.  Note that   for $A \in  \mathrm{M}(n,\H)$, ${\rm det(A)}$ is always a non-negative real number; see \cite[Theorem 5.9.2]{rodman}.
\end{definition}

We will consider the Lie groups  $ \mathrm{GL}(n,\H) := \{ g \in \mathrm{M}(n,\H) \mid \det(g) \neq 0 \}$ and  $ \mathrm{SL}(n,\H) := \{ g \in \mathrm{GL}(n,\H) \mid \det(g) = 1 \}$. 
In the following lemma, we recall the Jordan decomposition in  $\mathrm{M}(n,\H)$; see {\cite[Theorem 5.5.3]{rodman} for more details.
	\begin{lemma}[{\cite{rodman}}] \label{lem-Jordan-M(n,H)}
		For every $A \in  \mathrm{M}(n,\H)$,  there exists an invertible matrix $S \in  \mathrm{GL}(n,\H)$ such that 
		\begin{equation} \label{equ-Jordan-M(n,H)}
			SAS^{-1} =  \mathrm{J}(\lambda_1, m_1) \oplus  \cdots \oplus  \mathrm{J}(\lambda_k, m_k),
		\end{equation}
		where $ \lambda_1,  \dots,  \lambda_k $ are complex numbers (not necessarily distinct) and have non-negative imaginary parts.  
		The form  \eqref{equ-Jordan-M(n,H)} is uniquely determined by $A$ up to a permutation of Jordan blocks.
	\end{lemma}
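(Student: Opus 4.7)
The plan is to deduce the stated decomposition from the classical Jordan form over $\C$ via the embedding $\Phi: \mathrm{M}(n,\H) \to \mathrm{M}(2n,\C)$ of \eqref{eq-embedding-phi}. The guiding observation is that $\Phi$ identifies $\mathrm{M}(n,\H)$ with the real subalgebra $\{B \in \mathrm{M}(2n,\C) \mid J_n \overline{B} J_n^{-1} = B\}$ of $\mathrm{M}(2n,\C)$, where $J_n := \begin{psmallmatrix} 0 & I_n \\ -I_n & 0 \end{psmallmatrix}$ corresponds to right multiplication by $\jb$. This symmetry both constrains the complex Jordan structure of $\Phi(A)$ and governs when a complex Jordan form descends to one over $\H$.

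First I would apply the classical complex Jordan theorem to $B := \Phi(A)$ to obtain $T \in \mathrm{GL}(2n,\C)$ with $T B T^{-1}$ a complex Jordan matrix, and then analyze how the antilinear map $\sigma: v \mapsto J_n \overline{v}$ (with $\sigma^2 = -I_{2n}$) interacts with the generalized eigenspaces of $B$. Since $\sigma$ sends the generalized $\lambda$-eigenspace to the generalized $\overline{\lambda}$-eigenspace, the Jordan blocks of $B$ at non-real eigenvalues $\lambda$ and $\overline{\lambda}$ are paired with matching sizes, while for a real eigenvalue the restriction of $\sigma$ is a quaternionic structure on the corresponding generalized eigenspace, forcing every block size there to have even multiplicity. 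Arranging the complex Jordan form compatibly with these pairings shows it equals $\Phi(J_\H)$ for a quaternionic block-diagonal matrix $J_\H = \mathrm{J}(\lambda_1, m_1) \oplus \cdots \oplus \mathrm{J}(\lambda_k, m_k)$ with all $\lambda_i$ of non-negative imaginary part.

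The main obstacle is then to modify $T$ so that it lies in $\Phi(\mathrm{GL}(n,\H))$, since a priori $T$ need not commute with the quaternionic structure. Setting $\tau(X) := J_n \overline{X} J_n^{-1}$, both $T$ and $\tau(T)$ conjugate $B$ to $\Phi(J_\H)$, so $C := T^{-1}\tau(T)$ lies in the centralizer $Z$ of $\Phi(J_\H)$ in $\mathrm{GL}(2n,\C)$ and satisfies the cocycle identity $C \cdot \tau(C) = I$. The centralizer $Z$ admits an explicit block-upper-triangular Toeplitz description on which $\tau$ acts in an understood way; a Hilbert-90 style argument, or equivalently a direct inductive construction of a $\sigma$-invariant Jordan basis using the pairings above, yields $X \in Z$ with $C = X \cdot \tau(X)^{-1}$. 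Replacing $T$ by $T X$ produces a $\tau$-invariant conjugating matrix, which must be $\Phi(S)$ for some $S \in \mathrm{GL}(n,\H)$ satisfying $S A S^{-1} = J_\H$.

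Uniqueness of the quaternionic Jordan form up to permutation of blocks then inherits from uniqueness of the complex Jordan form of $\Phi(A)$: the multiset of paired blocks $\{\mathrm{J}(\lambda_i, m_i), \mathrm{J}(\overline{\lambda_i}, m_i)\}$ determines $\{(\lambda_i, m_i)\}$ once the half-plane convention $\mathrm{Im}(\lambda_i) \geq 0$ is imposed. An alternative intrinsic route would be to decompose $\H^n$ directly into $\H$-submodules of the form $\ker(L_A - R_\lambda)^N$ for $\lambda \in \R$ and $\ker(L_A - R_\lambda)^N \oplus \ker(L_A - R_{\overline{\lambda}})^N$ for non-real $\lambda$, noting that right multiplication by $\jb$ swaps the two summands since $\lambda \jb = \jb \overline{\lambda}$, and then build a Jordan basis on each summand; this bypasses the cohomological descent at the cost of extra care with the non-commutativity of $\H$.
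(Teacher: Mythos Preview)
The paper does not supply its own proof of this lemma: it is quoted verbatim from Rodman \cite[Theorem 5.5.3]{rodman} as background, with no argument given. So there is no in-paper proof to compare against; your proposal is an independent justification of a result the authors simply import.

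That said, your descent strategy via $\Phi$ and the antilinear symmetry $\sigma(v)=J_n\overline{v}$ is one of the standard routes to the quaternionic Jordan form and is structurally correct. The eigenvalue pairing $\lambda\leftrightarrow\overline{\lambda}$ and the even-multiplicity claim at real eigenvalues both follow exactly as you indicate. The only genuinely thin spot is the ``Hilbert-90 style'' step: solving $C = X\,\tau(X)^{-1}$ inside the centralizer $Z$ of $\Phi(J_\H)$ amounts to showing that $H^1(\langle\tau\rangle, Z)$ is trivial, and this is not automatic---it relies on the explicit block-upper-triangular Toeplitz description of $Z$ and a concrete construction (e.g.\ averaging, or an inductive splitting along the block filtration) that you have gestured at but not carried out. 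Your alternative intrinsic route, building a $\jb$-compatible Jordan basis directly on each generalized eigenspace of $\H^n$, sidesteps this cohomological issue entirely and is closer in spirit to how textbook treatments such as Rodman's typically proceed.
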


	\subsection{Weyr canonical form}
	In this section, we will recall notations for partitioning a positive integer $n$ and the notion of  Weyr canonical form in ${\rm M }(n,\C)$ from \cite[Section 2]{GLM2}. 
	\begin{definition}[{\cite{COV}}]\label{def-partition-dual} 
		A \textit{partition} of  a positive integer $n$ is a finite sequence $(n_1,n_2,\dots, n_r)$ of  positive integers  such that  $n_1 + n_2 + \dots + n_r =n$ and $n_1 \geq n_2 \geq \dots \geq n_r \geq  1$. Moreover, the \textit{conjugate partition} (or \textit{dual partition})  of  the partition $(n_1,n_2,\dots, n_r)$ of $n$ is the partition $(m_1,m_2,\dots, m_{n_1})$, where $m_j = | \{i \mid n_i \geq j\}|$, cardinality of the set $\{i \mid n_i \geq j\}$.
	\end{definition}
	
	\begin{definition}[{\cite[Section 3.3]{GM}}] \label{def-special-partition-1}
		A \textit{partition }of a positive integer $n$ is an object of the form $$ {\d}(n) := [ d_1^{t_{d_1}}, \dots,   d_s^{t_{d_s}} ],$$ where $t_{d_i}, d_i \in \N, 1\leq i\leq s, $ such that $ \sum_{i=1}^{s} t_{d_i} d_i = n, t_{d_i} \geq 1 $ and  $ d_1 >  \cdots > d_s > 0$. Moreover,  for a partition $\d (n)\, =\, [ d_1^{t_{d_1}},\, \ldots ,\, d_s^{t_{d_s}} ]$ of $n$, define ${\N}_{\d(n)}  \,:=\, \{ d_i \,\mid\, 1 \,\leq\, i \,\leq\, s \}$. Further, define
		$$ {\E}_{\d(n)}  \,:=\, {\N}_{\d(n)}  \cap (2\N), {\O}_{\d(n)}  \,:=\, {\N}_{\d(n)}\setminus {\E}_{\d(n)}, \hbox{ and } \E_{{\d}(n)}^2 := \{ \eta \in \E_{{\d}(n)} \mid \eta \equiv 2 \pmod  {4} \}.$$
	\end{definition}
	
	Here, we introduced two notations $(n_1,n_2,\cdots, n_r)$ and $ {\d}(n)$ for the partition of a positive integer $n$; see  \defref{def-partition-dual} and \defref{def-special-partition-1}. 
	Recall the following lemma from \cite{GLM2}, which gave the relationship between the partition $ {\d}(n)$ and its conjugate partition $ \overline{\d}(n)$. 
	\begin{lemma}[{\cite[Lemma 2.2]{GLM2}}]\label{lem-relation-both-partition}
		Let  $ {\d}(n) = [ d_1^{t_{d_1}}, \dots,   d_s^{t_{d_s}} ]$ be a partition of a positive integer $n$. Then the conjugate partition $ \overline{\d}(n)$ of $ {\d}(n)$ has the following form:
		$$ \overline{\d}(n) = \Big[ (t_{d_1}+t_{d_2}+\cdots+t_{d_s})^{d_s},  (t_{d_1}+t_{d_2}+\cdots+t_{d_{s-1}})^{d_{s-1}-d_s}, \dots,  (t_{d_1}+t_{d_2})^{d_2 -d_3},(t_{d_1})^{d_1 -d_2}\Big].$$
	\end{lemma}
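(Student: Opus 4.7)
The plan is to apply \defref{def-partition-dual} directly to the data $\d(n) = [d_1^{t_{d_1}}, \ldots, d_s^{t_{d_s}}]$. First I would translate between the two notational conventions by writing $\d(n)$ as the weakly decreasing sequence $(n_1, n_2, \ldots, n_r)$ of \defref{def-partition-dual}, where $r = t_{d_1} + t_{d_2} + \cdots + t_{d_s}$, and for each $1 \leq k \leq s$ the value $d_k$ occupies the positions indexed from $t_{d_1} + \cdots + t_{d_{k-1}} + 1$ through $t_{d_1} + \cdots + t_{d_k}$ (with the empty sum interpreted as $0$ when $k = 1$).

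Next, I would compute the conjugate sequence $m_j := |\{i \mid n_i \geq j\}|$ for $1 \leq j \leq n_1 = d_1$ by splitting the range of $j$ into the $s$ subintervals $d_{k+1} < j \leq d_k$ for $k = 1, \ldots, s$, using the convention $d_{s+1} := 0$. On the subinterval indexed by $k$, the entries $n_i$ with $n_i \geq j$ are precisely those equal to $d_1, d_2, \ldots, d_k$, so $m_j$ takes the constant value $t_{d_1} + \cdots + t_{d_k}$ there, and the subinterval contributes exactly $d_k - d_{k+1}$ consecutive indices $j$ at which this value is attained.

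Finally, I would assemble these constant blocks into the multiplicity-exponent notation of \defref{def-special-partition-1}. Reading $m_1, m_2, \ldots, m_{d_1}$ from low $j$ to high $j$ and grouping equal consecutive values, the block of $d_s$ indices with $j \in \{1, \ldots, d_s\}$ contributes the factor $(t_{d_1} + \cdots + t_{d_s})^{d_s}$, the next block of length $d_{s-1} - d_s$ contributes $(t_{d_1} + \cdots + t_{d_{s-1}})^{d_{s-1} - d_s}$, and so on down to $(t_{d_1})^{d_1 - d_2}$, which matches the stated formula for $\overline{\d}(n)$. No serious obstacle is expected: this is essentially the standard Young-diagram conjugation identity expressed in multiplicity-exponent notation, so the argument is routine bookkeeping. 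The only points requiring mild care are keeping the edge convention $d_{s+1} := 0$ consistent for the leftmost block, and verifying that the listed exponents are positive and the listed base values strictly decrease, both of which are immediate from $d_k > d_{k+1}$ and $t_{d_k} \geq 1$.
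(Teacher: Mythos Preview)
Your argument is correct: it is exactly the standard computation of the conjugate partition via $m_j = |\{i : n_i \geq j\}|$, carried out by splitting the range $1 \leq j \leq d_1$ into the intervals $(d_{k+1}, d_k]$ and reading off the constant value $t_{d_1} + \cdots + t_{d_k}$ on each. Note that the present paper does not supply its own proof of this lemma --- it is quoted verbatim from \cite{GLM2} --- so there is nothing in the paper to compare against; your direct verification from \defref{def-partition-dual} is precisely the expected routine argument.
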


	Now, we will recall the notion of  Weyr canonical form in $\mathrm{M}(n,\C)$ from \cite[Section 2.4]{GLM2}. We refer to  \cite{Sh, COV} for an elaborate discussion on  Weyr canonical forms.
	
	\begin{definition} [{\cite[Definition 2.1.1]{COV}}]\label{def-basic-Weyr-block}
		A \textit{basic Weyr matrix with eigenvalue} $\lambda$  is a matrix $W	 \in  \mathrm{M}(n,\C)$ of the following form: 
		there is a partition $(n_1,n_2, \dots,n_r)$ of $n$ such that, when $W$ is viewed as an $r \times r$ blocked matrices $(W_{ij})_{1 \leq i,j \leq r}$, where the $(i,j)$-th block $W_{ij}$ is an $ n_i \times n_j$ matrix, the following three features are present.
		\begin{enumerate}
			\item The main diagonal blocks  $W_{i,i}$  are the $ n_i \times n_i$ scalar matrices $\lambda \mathrm{I}_{n_i} $ for  $ i = 1,2, \dots, r$.
			\item The first super-diagonal blocks $W_{i,{i+1}}$ are the $ n_i \times n_{i+1}$ matrices in reduced row-echelon form (that is, an identity matrix followed by zero rows) for $ i = 1,2, \dots, r-1$. In other words, the rank of the reduced row-echelon matrix $W_{i,{i+1}}$  is $n_{i+1}$  for $ i = 1,2, \dots, r-1$.
			\item All other blocks of $W$ are zero (that is, $W_{ij}=0$ when $j \neq i, i+1)$.
		\end{enumerate}
		In this case, we say that $W$ has \textit{Weyr structure} $(n_1,n_2, \dots,n_r)$.
		\qed
	\end{definition}

	\begin{definition}[{\cite[Definition 2.1.5]{COV}}]\label{def-general-Weyr-matrix}
		Let $W	 \in  \mathrm{M}(n,\C)$, and let $\lambda_1, \lambda_2,\dots, \lambda_k$ be the distinct eigenvalues of $W$. We say that $W$ is in Weyr form (or is a Weyr matrix) if W is a direct sum of basic Weyr matrices, one for each distinct eigenvalue. In other words, $W$ has the following  form:
		$$W = W_1 \oplus W_2 \oplus \dots \oplus W_k,$$
		where $W_i $ is a basic Weyr matrix with eigenvalue  $\lambda_{i}$ for $i = 1, 2, \dots,k$.
	\end{definition}

	\begin{remark}\label{rem-duality-forms} Note that a permutation transformation conjugates the Jordan and Weyr forms of a square matrix with complex entries. Moreover, for a given $A \in  \mathrm{M}(n,\C)$ with a single eigenvalue $\lambda \in \C$, there is a duality between the partitions corresponding to the Jordan and Weyr form of $A$, respectively. In particular, if ${\d}(n)$ is a partition corresponding to Jordan form of $A \in \mathrm{M}(n,\C)$, then the corresponding Weyr  form $A_W $ of $A$ is  represented  by  the conjugate  partition $\overline{\d}(n)$, as given by  \lemref{lem-relation-both-partition}; see \cite[Theorem 2.4.1]{COV} and \cite[Section 2.5]{GLM2}.
	\end{remark}

	In the following result, we recall the centralizer of a basic Weyr matrix in $\mathrm{M}(n,\C)$. 
	\begin{proposition}[{\cite[Proposition 2.3.3]{COV}}]\label{prop-centralizer-basic-Weyr-block}
		Let $W	 \in  \mathrm{M}(n,\C)$ be an $n \times n$ basic Weyr matrix with the Weyr structure $(n_1,\dots, n_r)$,  $ r \geq 2$. Let $K$ be an $n \times n$ matrix, blocked according to the partition $(n_1,\dots, n_r)$, and let $K_{i,j}$ denote its $(i, j)$-th block (an $n_i \times n_j$ matrix) for $i, j \in \{1,\dots, r\} $. Then $W$ and $K$ commute if and only if $K$ is a block upper triangular matrix for which
		$$ K_{i,j}=  \begin{psmallmatrix}
			K_{i+1,j+1} & 
			\ast  \\
			0	&  \ast
		\end{psmallmatrix} \hbox{ for all } 1 \leq i \leq j \leq r-1.$$
		Here, we have written $K_{i,j}$  as a blocked matrix where the zero block is $(n_i - n_{i+1}) \times n_{j+1}$. The asterisk entries $(\ast)$ indicate that there are no restrictions on the entries in that part of the matrix. The column of asterisks disappears if $n_j = n_{j+1}$, and the $\begin{psmallmatrix}
			0	&  \ast
		\end{psmallmatrix}$ row disappears if $n_i = n_{i+1}$.
	\end{proposition}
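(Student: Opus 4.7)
The plan is to verify the commutation condition $WK=KW$ by a direct block-wise computation, reading off the constraints on the blocks $K_{i,j}$. Since the only nonzero blocks of $W$ are the diagonal blocks $W_{i,i}=\lambda I_{n_i}$ and the super-diagonal blocks $W_{i,i+1}=E_i$, where $E_i$ denotes the $n_i\times n_{i+1}$ matrix $\bigl(\begin{smallmatrix}I_{n_{i+1}}\\ 0\end{smallmatrix}\bigr)$ (an identity stacked over a zero block), in the matrix product only at most two terms contribute to each block: one computes
$(WK)_{i,j}=\lambda K_{i,j}+E_i K_{i+1,j}$ (with the second term absent if $i=r$) and $(KW)_{i,j}=\lambda K_{i,j}+K_{i,j-1}E_{j-1}$ (with the second term absent if $j=1$). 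Hence the commutation condition is equivalent to the system
$E_i K_{i+1,j}=K_{i,j-1}E_{j-1}$ for $1\le i\le r-1$, $2\le j\le r$, together with the boundary equations $E_i K_{i+1,1}=0$ and $K_{r,j-1}E_{j-1}=0$.

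Next I would extract block upper triangularity. The key observation is that $E_i$ has full column rank, so left-multiplication by $E_i$ is injective; dually, right-multiplication by $E_{j-1}$ selects the first $n_j$ columns and is injective on those columns. The boundary equation $E_i K_{i+1,1}=0$ therefore gives $K_{i+1,1}=0$ for every $i\ge1$, killing the first block column below the diagonal. Now induct on the block column index $j$: assuming $K_{i,\ell}=0$ whenever $i>\ell$ and $\ell<j$, apply the recursion $E_{i-1}K_{i,j}=K_{i-1,j-1}E_{j-1}$ with $i>j$. The right-hand side vanishes by the inductive hypothesis, and injectivity of $E_{i-1}$ then forces $K_{i,j}=0$. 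This establishes $K_{i,j}=0$ for all $i>j$, i.e.\ $K$ is block upper triangular.

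For the on-and-above diagonal constraints, the same recursion becomes structural rather than vanishing. Fix $1\le i\le j\le r-1$ and expand: the left-hand side $E_i K_{i+1,j+1}$ equals the $n_i\times n_{j+1}$ matrix with $K_{i+1,j+1}$ sitting on top and a zero $(n_i-n_{i+1})\times n_{j+1}$ block below; the right-hand side $K_{i,j}E_j$ is exactly the first $n_{j+1}$ columns of $K_{i,j}$. Equating them and partitioning $K_{i,j}$ as a two-by-two block matrix with row split $n_{i+1}+(n_i-n_{i+1})$ and column split $n_{j+1}+(n_j-n_{j+1})$, one reads off that the top-left sub-block equals $K_{i+1,j+1}$ and the bottom-left sub-block is zero, while the entries in the remaining $n_j-n_{j+1}$ columns are completely unconstrained. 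This is precisely the block form asserted in the proposition, and the degenerate situations $n_i=n_{i+1}$ or $n_j=n_{j+1}$ correspond to the collapsing of one of the asterisk strips. The converse is automatic: matrices of the prescribed shape satisfy each equation $E_i K_{i+1,j}=K_{i,j-1}E_{j-1}$ by the same calculation run in reverse.

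I do not expect any conceptual obstacle here; the only real difficulty is the careful bookkeeping of block dimensions, since the sequence $(n_1,\dots,n_r)$ is only weakly decreasing and one must track when the two asterisk strips have width or height zero. Everything else is linear algebra of rank and injectivity for the specific rectangular identity blocks $E_i$.
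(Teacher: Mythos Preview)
Your argument is correct. The block-wise expansion of $WK=KW$, the reduction to the recursion $E_iK_{i+1,j}=K_{i,j-1}E_{j-1}$ together with the two boundary conditions, the induction on the block-column index to force block upper triangularity (using that each $E_i$ has full column rank), and the identification of the top-left sub-block of $K_{i,j}$ with $K_{i+1,j+1}$ all go through exactly as you describe.

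There is nothing to compare with in the present paper: the proposition is quoted from \cite[Proposition~2.3.3]{COV} without proof, as a background result on Weyr forms. Your write-up is essentially the standard proof given in that reference, carried out by the same direct block computation.
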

	
	We refer to \cite[Example 2.11]{GLM2} for illustration of  \propref{prop-centralizer-basic-Weyr-block}, which describes the centralizer of a Weyr matrix. In this article, for $A \in \mathrm{M}(n,\C)$, we will use the notation $A_W$  to denote the corresponding Weyr form.
	
	In \cite{COV}, the Weyr canonical forms of square matrices over algebraically closed fields are studied. Here, we will extend the notion of Weyr canonical form for matrices over the quaternions. In particular, in view of \lemref{lem-Jordan-M(n,H)}, we can write $A\in \mathrm{M}(n,\H)$ in the Weyr canonical form over the complex numbers.

	\subsection{Matrices commuting with Jordan blocks}
	In the following lemma, we recall the well-known Sylvester’s theorem on solutions to the matrix equation $ AX= XB$; see \cite{rodman} for more details. 
	
	\begin{lemma}[{\cite[Theorem 5.11.1]{rodman}}] \label{lem-Sylvester}
		Let $A\in  \mathrm{M}(n, \H)$  and $B\in  \mathrm{M}(m, \H)$. Then the equation
		$$ AX= XB$$
		has only the trivial solution if and only if $A$ and $B$ have no common eigenvalues.
	\end{lemma}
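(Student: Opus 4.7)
The plan is to reduce to the case of single Jordan blocks via \lemref{lem-Jordan-M(n,H)} and then analyze the matrix equation entry-by-entry. First, write $A = S A_J S^{-1}$ and $B = T B_J T^{-1}$ with $A_J$ and $B_J$ in Jordan form. The substitution $Y = S^{-1} X T$ converts $AX = XB$ into $A_J Y = Y B_J$ with the same solvability, so one may assume $A$ and $B$ are themselves Jordan forms. Partitioning $Y$ conformally with the block structures splits the single equation into independent equations $\mathrm{J}(\lambda,p)\, Y_{ij} = Y_{ij}\, \mathrm{J}(\mu,q)$, one for each pair of Jordan blocks. Thus it suffices to handle two single Jordan blocks with eigenvalues $\lambda,\mu \in \C$ having non-negative imaginary parts.

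The heart of the argument is a pointwise fact: if $\lambda, \mu \in \C$ are distinct with non-negative imaginary parts, then $\lambda x = x\mu$ for $x \in \H$ forces $x = 0$. Writing $x = a + b\jb$ with $a,b \in \C$ and using $\jb \mu = \overline{\mu}\,\jb$,
\[
\lambda x - x\mu \;=\; (\lambda - \mu)\,a \;+\; (\lambda - \overline{\mu})\,b\,\jb,
\]
so $(\lambda - \mu)a = 0$ and $(\lambda - \overline{\mu})b = 0$. Since $\lambda \neq \mu$, $a = 0$. Moreover, $\lambda = \overline{\mu}$ would force $\mathrm{Im}(\lambda) = -\mathrm{Im}(\mu) \leq 0$; combined with both imaginary parts being non-negative, this makes $\lambda, \mu$ real and equal, contradicting $\lambda \neq \mu$. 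Hence $\lambda \neq \overline{\mu}$, $b = 0$, and $x = 0$.

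Now assume $\lambda \neq \mu$ and $Y = (y_{ij}) \in \mathrm{M}(p \times q,\H)$ satisfies $\mathrm{J}(\lambda,p)\,Y = Y\,\mathrm{J}(\mu,q)$. The entry-wise equation reads
\[
\lambda\, y_{ij} + y_{i+1,j} \;=\; y_{i,j-1} + y_{ij}\,\mu,
\]
with boundary conventions $y_{p+1,j} = 0$ and $y_{i,0} = 0$. The $(p,1)$-entry yields $\lambda y_{p,1} = y_{p,1}\mu$, so $y_{p,1} = 0$ by the pointwise fact. Proceeding along the bottom row, $y_{p,j-1} = 0$ gives $\lambda y_{p,j} = y_{p,j}\mu$, hence $y_{p,j} = 0$. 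An induction that moves upward row by row — invoking $y_{i+1,j} = 0$ from the already-cleared lower row and $y_{i,j-1} = 0$ from earlier in the current row — shows $Y \equiv 0$. For the converse, if $A$ and $B$ share an eigenvalue $\lambda$, choose Jordan blocks $\mathrm{J}(\lambda,p)$ in $A_J$ and $\mathrm{J}(\lambda,q)$ in $B_J$, and take $Y_0$ to be the matrix with a single $1$ in position $(1,q)$ of the appropriate sub-block and zeros elsewhere. A direct check gives $\mathrm{J}(\lambda,p)\,Y_0 = \lambda Y_0 = Y_0\,\mathrm{J}(\lambda,q)$ (all entries of $Y_0$ are real, hence commute with $\lambda$); then $X = S Y_0 T^{-1}$ is a nonzero solution to $AX = XB$.

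The main obstacle is the non-commutativity of $\H$. Over a field one would simply apply $(A - \lambda I)^p$ to the equation to obtain $X(B - \lambda I)^p = 0$ and then invoke the invertibility of $(B - \lambda I)^p$. Here $\lambda \in \C$ does not commute with the quaternionic matrix $X$, so this shortcut fails. The pointwise claim $\lambda x = x\mu \Rightarrow x = 0$ — whose validity hinges on the convention that eigenvalue representatives lie in the closed upper half-plane, ruling out the troublesome case $\lambda = \overline{\mu}$ — is precisely what replaces the standard field-theoretic argument.
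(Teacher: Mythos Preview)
The paper does not supply its own proof of \lemref{lem-Sylvester}; it is quoted directly from Rodman's book as a known result. Your argument is therefore not being compared against anything in the paper, but it is correct and self-contained. The reduction to single Jordan blocks via \lemref{lem-Jordan-M(n,H)} is sound, the entrywise double induction (bottom row first, then upward) is valid, and the key scalar observation --- that for distinct $\lambda,\mu\in\C$ with $\mathrm{Im}(\lambda),\mathrm{Im}(\mu)\ge 0$ the equation $\lambda x = x\mu$ in $\H$ forces $x=0$ --- is exactly the right replacement for the field-theoretic $(A-\lambda I)^p$ trick, and your verification of it is correct. The converse construction with $Y_0$ having a single real entry also checks out.
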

	The following lemma is useful in understanding the centralizer of a matrix in $\mathrm{M}(n, \H)$.
	\begin{lemma}[{\cite[Proposition 3.1.1]{COV}}]\label{lem-commuting-block-matrices}
		Let $A =   \begin{psmallmatrix}
			A_1   &   \\
			&   A_2  \\
		\end{psmallmatrix}  \in \mathrm{M}(n,\H)$, where $A_1 \in \mathrm{M}(m,\H)$   (resp. $A_2  \in \mathrm{M}(n-m,\H)$) has a single    eigenvalue $\lambda_1$  (resp. $\lambda_2$) such that $[\lambda_1] \neq [\lambda_2]$. If $B  \in \mathrm{M}(n,\H)$ such that $B A = AB$, then $B$ has the following form
		$$B =   \begin{psmallmatrix}
			B_1   &   \\
			&   B_2  \\
		\end{psmallmatrix}, $$ where 
		$B_1 \in \mathrm{M}(m,\H)$ and $B_2   \in \mathrm{M}(n-m,\H)$ such that  $$B_1 A_1 = A_1B_1 \, \hbox{  and  }  \, B_2 A_2 = A_2B_2.$$
	\end{lemma}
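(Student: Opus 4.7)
The plan is to partition the matrix $B$ according to the block structure of $A$ and then use Sylvester's theorem (\lemref{lem-Sylvester}) to kill the off-diagonal blocks.

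Concretely, I would write $B$ as a $2\times 2$ block matrix conformally with $A$, namely
\[
B \,=\, \begin{psmallmatrix} B_1 & B_{12} \\ B_{21} & B_2 \end{psmallmatrix},
\]
with $B_1 \in \mathrm{M}(m,\H)$, $B_2 \in \mathrm{M}(n-m,\H)$, $B_{12} \in \mathrm{M}(m \times (n-m),\H)$ and $B_{21} \in \mathrm{M}((n-m) \times m,\H)$. Expanding the identity $BA = AB$ block-by-block yields the four relations
\[
B_1 A_1 \,=\, A_1 B_1, \quad B_2 A_2 \,=\, A_2 B_2, \quad A_1 B_{12} \,=\, B_{12} A_2, \quad A_2 B_{21} \,=\, B_{21} A_1.
\]
The first two are exactly the desired conclusions about $B_1$ and $B_2$, so the only thing left to establish is $B_{12} = 0$ and $B_{21} = 0$.

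For this, observe that $A_1$ has $\lambda_1$ as its only eigenvalue and $A_2$ has $\lambda_2$ as its only eigenvalue, with $[\lambda_1] \neq [\lambda_2]$. Hence $A_1$ and $A_2$ share no common similarity class of eigenvalues. The equation $A_1 B_{12} = B_{12} A_2$ has the form $AX = XB$ of \lemref{lem-Sylvester} with $A = A_1$, $B = A_2$, so it forces $B_{12} = 0$. A symmetric application to $A_2 B_{21} = B_{21} A_1$ yields $B_{21} = 0$. Thus $B$ is block diagonal of the required shape.

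There is essentially no obstacle here once one has \lemref{lem-Sylvester} in hand; the only care needed is to record that eigenvalues of quaternionic matrices are being treated up to similarity class, which is precisely why the hypothesis is phrased as $[\lambda_1] \neq [\lambda_2]$ rather than $\lambda_1 \neq \lambda_2$, and this matches the statement of Sylvester's theorem as given.
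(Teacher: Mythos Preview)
Your proof is correct and follows essentially the same approach as the paper's own proof: partition $B$ conformally with $A$, expand $BA=AB$ blockwise, and apply \lemref{lem-Sylvester} to the two off-diagonal equations to force those blocks to vanish. The only differences are cosmetic notation.
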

	
	\begin{proof}
		Let $B =	\begin{psmallmatrix}
			B_{1,1}  & 	B_{1,2}  \\
			B_{2,1} 	&	B_{2,2} 
		\end{psmallmatrix} \in \mathrm{M}(n,\H)$ be an element  having the same block structure as $A$ such that $BA=AB$. Then we have
		$$	\begin{psmallmatrix}
			B_{1,1}  & 	B_{1,2}  \\
			B_{2,1} 	&	B_{2,2} 
		\end{psmallmatrix} 	\begin{psmallmatrix}
			A_1  &  \\
			&A_2
		\end{psmallmatrix} =	\begin{psmallmatrix}
			A_1  &  \\
			&A_2
		\end{psmallmatrix}\begin{psmallmatrix}
			B_{1,1}  & 	B_{1,2}  \\
			B_{2,1} 	&	B_{2,2} 
		\end{psmallmatrix} . $$
		This implies that
		$$  \begin{psmallmatrix}
			B_{1,1} A_1 & 	B_{1,2} A_2 \\
			B_{2,1} A_1  &	B_{2,2} A_2 
		\end{psmallmatrix} = \begin{psmallmatrix}
			A_1 B_{1,1} & 	 A_1 B_{1,2}\\
			A_2	B_{2,1} &	 A_2 B_{2,2}
		\end{psmallmatrix} 
		.$$	
		Thus, we have $ B_{1,2} A_2 = A_1 B_{1,2} $ and $B_{2,1} A_1 = A_2	B_{2,1}$. Since $A_1$ and $A_2$ have no common eigenvalue, using \lemref{lem-Sylvester}, we conclude that $B_{1,2}$ and $B_{2,1}$ are zero matrices. Let $B_1 = B_{1,1} $ and $B_2 = B_{2,2} $. Then, we have $B_1 A_1 = A_1B_1 \, \hbox{  and  }  \, B_2 A_2 = A_2B_2.$ This proves the lemma.
	\end{proof}
	
	To formulate results related to matrices commuting with Jordan blocks in $\mathrm{M}(n, \H)$, we need to recall the notation for upper triangular \textit{Toeplitz} matrices.  
	
	\begin{definition}[{\cite[p. 95]{rodman}}]\label{def-toeplitz}
		For $\mathbf{x}:=(x_{1},x_{2},\dots, x_{n}) \in \H^{n}$,  we define $\mathrm{Toep}_{n}(\textbf{x}) \in \mathrm{M}(n, \H)$ as \begin{equation}\label{eq-Toeplitz-matrix}
			\mathrm{Toep}_{n}(\mathbf{x}):= \begin{psmallmatrix}
				x_{1} & x_{2}   & \cdots & \cdots & \cdots & x_{n-1} & x_{n}\\
				& x_{1} & x_{2}   & \cdots  & \cdots & x_{n-2}& x_{n-1}\\
				&  & x_{1} & x_{2}  &  \cdots  &  x_{n-3}& x_{n-2}\\
				&  &  &\ddots   & \ddots  &&  \vdots\\
				& & & & \ddots & \ddots   & \vdots\\
				& & & &  & x_{1} & x_{2} \\
				&   &  &  &  &  & x_{1}
			\end{psmallmatrix}.
		\end{equation}
	\end{definition}
	We can also write $\mathrm{Toep}_{n}(\textbf{x})$ as
	\begin{equation}\label{eq-expression-toeplitz}
		\mathrm{Toep}_{n}(\textbf{x}):= [x_{i,j}]_{n \times n} =  \begin{cases}
			0 & \text{if $i>j$ }\\
			x_{j-i+1}& \text{if $i \leq j$}\\
		\end{cases}, \hbox{ where } 1 \leq i,j \leq n.
	\end{equation}
	
	In the following lemma, we recall a basic result that gives the centralizer of the Jordan blocks in $\mathrm{M}(n, \H)$; see  \cite{rodman}  for more details.
	\begin{lemma}[{\cite[Proposition 5.4.2]{rodman}}] \label{lem-commute-Jordan-blocks}
		Let $B \in  \mathrm{M}(n, \H)$ be  such that $B  \,  \mathrm{J}(\lambda,n)=  \mathrm{J}(\lambda,n) \,B$. Then $B$ has the following form:
		$$B =  \mathrm{Toep}_{n}(\mathbf{x}) \hbox{ for some }\mathbf{x}  \in 
		\begin{cases}
			\H^{n}	 & \text{ if }\lambda \in \R ,\\
			\C^{n}  & \text{ if } \lambda \in \C \setminus \R .
		\end{cases}$$
		In particular, if $B \in  \mathrm{M}(n, \H)$  such that $B  \,  \mathrm{J}(e^{\ib \theta},n)=  \mathrm{J}(e^{\ib \theta},n) \,B$, where  $\theta \in (0, \pi)$, then $B =  \mathrm{Toep}_{n}(\mathbf{x}) $ for some $\mathbf{x}  \in \C^{n}$.
	\end{lemma}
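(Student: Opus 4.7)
The plan is to compare entries of both sides of $B\,\mathrm{J}(\lambda,n) = \mathrm{J}(\lambda,n)\,B$ directly. Writing $B = [b_{i,j}]$ and using $\mathrm{J}(\lambda,n) = \lambda\,\mathrm{I}_n + N$ with $N$ the super-diagonal shift, entry-wise comparison yields the recurrence
\[
b_{i+1,j} \;=\; b_{i,j-1} - D_\lambda(b_{i,j}), \qquad 1 \leq i,j \leq n,
\]
where $D_\lambda\colon \H \to \H$ is the $\R$-linear map $x \mapsto \lambda x - x\lambda$, under the boundary conventions $b_{i,0} = 0$ and $b_{n+1,j} = 0$. Everything is then controlled by the kernel and image of $D_\lambda$.

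If $\lambda \in \R$ then $D_\lambda \equiv 0$, the recurrence collapses to $b_{i+1,j} = b_{i,j-1}$, and a straightforward induction from $b_{i,0} = 0$ yields $b_{i,j} = 0$ for $i > j$ and $b_{i,j} = b_{1,\,j-i+1}$ otherwise, so $B = \mathrm{Toep}_n(\x)$ with $\x \in \H^n$ free. If instead $\lambda = a + b\ib \in \C \setminus \R$ with $b \neq 0$, a direct expansion of $x = c + d\ib + e\jb + f\kb$ using $\ib\jb = \kb$ and $\jb\ib = -\kb$ gives $D_\lambda(x) = 2b(e\kb - f\jb)$. Hence $\ker D_\lambda = \R + \R\ib = \C$ and $\mathrm{image}\,D_\lambda = \R\jb + \R\kb$ are complementary $\R$-subspaces of $\H$.

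I would then prove by induction on $j$ that (i) $b_{i,j} = 0$ for $i > j$, and (ii) $b_{i,j} \in \C$ with $b_{i,j} = b_{1,\,j-i+1}$ for $1 \leq i \leq j$. For the inductive step at a fixed $j$, run a \emph{backward} sub-induction on $i$ starting from $i = n$: the boundary $b_{n+1,j} = 0$ together with the outer hypothesis (which forces the $b_{\cdot,j-1}$ terms entering the recurrence to vanish whenever the row index exceeds $j-1$) places each successive $b_{i,j}$ with $i > j$ in $\ker D_\lambda \cap \mathrm{image}\,D_\lambda = \{0\}$, clearing every entry below the diagonal. Passing through the diagonal, the recurrence at index $j$ forces $b_{j,j} \in \C$; then for each $i \leq j$, the recurrence rewrites as $b_{i,j} - b_{i-1,j-1} = -D_\lambda(b_{i-1,j}) \in \R\jb + \R\kb$, while both $b_{i,j}$ and $b_{i-1,j-1}$ lie in $\C$ (by the sub-induction and the outer hypothesis respectively), and complementarity forces $b_{i,j} = b_{i-1,j-1}$ together with $b_{i-1,j} \in \C$, propagating the Toeplitz pattern.

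The delicate point is the case $\lambda \notin \R$: the same three-term recurrence must do triple duty --- killing sub-diagonal entries, pinning the survivors inside $\C$, and propagating the Toeplitz shift $b_{i,j} = b_{i-1,j-1}$ along every diagonal. All three uses rest on the decomposition $\H = \ker D_\lambda \oplus \mathrm{image}\,D_\lambda$ computed above. The special case $\lambda = e^{\ib\theta}$ with $\theta \in (0,\pi)$ is an immediate instance.
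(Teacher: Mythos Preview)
The paper does not prove this lemma; it is simply recalled from Rodman's book (Proposition 5.4.2) as a known fact, so there is no in-paper argument to compare against.

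Your proof is correct and self-contained. The entry-wise recurrence $b_{i+1,j} = b_{i,j-1} - D_\lambda(b_{i,j})$ is exactly what the commutation relation yields, and the decomposition $\H = \ker D_\lambda \oplus \mathrm{image}\,D_\lambda$ with $\ker D_\lambda = \C$ and $\mathrm{image}\,D_\lambda = \R\jb + \R\kb$ (for $\lambda \in \C \setminus \R$) is precisely the structural input that lets the same three-term relation force sub-diagonal vanishing, complex entries, and the Toeplitz shift simultaneously. The double induction (outer on $j$, inner backward on $i$) is organized correctly. One minor bookkeeping remark worth making explicit: when you place $b_{i,j}$ in $\mathrm{image}\,D_\lambda$ for $i > j$, you are invoking the recurrence at row $i-1$, which requires $i \geq 2$; this is automatic since $i > j \geq 1$, but stating it keeps the induction airtight.
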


	\section{Strongly reversible elements  in  ${\rm SL }(n,\H)$} \label{sec-str-rev-SL(n,H)}
	In this section, we will classify the strongly reversible elements in $\mathrm{SL}(n,\H)$.
	First, we recall the following notation from  \cite{GLM2}, which will be used to investigate reversing symmetries of reversible elements in $\mathrm{SL}(n,\H)$.
	
	\begin{definition} [{\cite[Definition 1.3]{GLM2}}]\label{def-special-matrix-omega}
		For a non-zero  $\lambda \in \C$, define the upper triangular matrix 
		$\Omega( \lambda, n) := [ x_{i,j} ]_{ n \times n}  \in   \mathrm{GL}(n,\C)$ such that:
		\begin{enumerate}
			\item $x_{i,j} = 0$   for all $1\leq i,j \leq n$ such that $i >j$,
			\item  $x_{i,n} = 0 $   for all $1\leq i \leq n-1$,
			\item  $x_{n,n} = 1$,
			\item  For $ 1\leq i \leq j \leq n-1 $,  define 
			\begin{equation}\label{eq-special-matrix-omega}
				x_{i,j} =   - \lambda^{-1} x_{i+1,j} - \lambda^{-2} x_{i+1,j+1}.
			\end{equation}
		\end{enumerate}
	\end{definition}
	Further, recall the following result from \cite[Section 3]{GLM2} which gives relationship between $\Omega( \lambda, n)$, $\Omega( \lambda^{-1}, n)$ and $\mathrm{J}(\lambda, n)$, where  $\lambda \neq 0$. 
	
	\begin{lemma}[{\cite [Lemma 3.2, Lemma 3.3]{GLM2}}] \label{lem-rel-omega-complex} 
		Let  $\Omega( \lambda, n) \in {\rm GL }(n,\C)$ be  as defined in \defref{def-special-matrix-omega}. Then, the following statements are true.
		\begin{enumerate}
			\item \label{cond-1-rel-omega-complex}	 $ \Omega( \lambda, n) \, \mathrm{J}(\lambda^{-1}, n) = \Big(\mathrm{J}(\lambda, n)\Big)^{-1} \,  \Omega( \lambda, n)$.
			\item \label{cond-2-rel-omega-complex}	$\Big(\Omega( \lambda, n)\Big)^{-1} = \Omega( \lambda^{-1}, n)$.
		\end{enumerate}
	\end{lemma}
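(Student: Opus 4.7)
The plan is to establish both identities by directly unpacking the recursive definition in \defref{def-special-matrix-omega}. For part~(1), I would multiply both sides on the left by $\mathrm{J}(\lambda,n)$, turning the claim into the more symmetric identity
\[
\mathrm{J}(\lambda,n)\,\Omega(\lambda,n)\,\mathrm{J}(\lambda^{-1},n) \;=\; \Omega(\lambda,n).
\]
Writing $\mathrm{J}(\lambda,n) = \lambda I + N$ and $\mathrm{J}(\lambda^{-1},n) = \lambda^{-1} I + N$ with $N$ the nilpotent upper-shift matrix, the left-hand side expands as $\Omega(\lambda,n) + \lambda\,\Omega(\lambda,n)\,N + \lambda^{-1} N\,\Omega(\lambda,n) + N\,\Omega(\lambda,n)\,N$, so the claim reduces to
\[
\lambda\,\Omega(\lambda,n)\,N + \lambda^{-1}\,N\,\Omega(\lambda,n) + N\,\Omega(\lambda,n)\,N \;=\; 0.
\]
At the $(i,j)$ entry this reads $\lambda\,x_{i,j-1} + \lambda^{-1} x_{i+1,j} + x_{i+1,j-1} = 0$, which after the index shift $j-1 \mapsto j$ is exactly the defining recurrence in \defref{def-special-matrix-omega}(4). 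The boundary regimes ($i=n$, $j=1$, $j=n$) follow from conditions (2) and (3) of that definition; the only slightly delicate corner is $(n-1,n)$, where matching the identity forces $x_{n-1,n-1} = -\lambda^{-2}$, a value which the recurrence indeed produces from $x_{n,n}=1$.

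For part~(2), rather than computing $\Omega(\lambda,n)\Omega(\lambda^{-1},n)$ head-on, I would use part~(1) twice. Rewriting (1) as $\mathrm{J}(\lambda,n)\,\Omega(\lambda,n) = \Omega(\lambda,n)\,\mathrm{J}(\lambda^{-1},n)^{-1}$ and applying the analogous identity with $\lambda$ replaced by $\lambda^{-1}$, a direct composition gives
\[
\Omega(\lambda,n)\,\Omega(\lambda^{-1},n)\,\mathrm{J}(\lambda,n) \;=\; \mathrm{J}(\lambda,n)\,\Omega(\lambda,n)\,\Omega(\lambda^{-1},n),
\]
so the product $\Omega(\lambda,n)\Omega(\lambda^{-1},n)$ commutes with $\mathrm{J}(\lambda,n)$. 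By \lemref{lem-commute-Jordan-blocks} it must therefore be an upper-triangular Toeplitz matrix $\mathrm{Toep}_n(c_1,\ldots,c_n)$. To pin down the $c_k$, I would read off only the last column: by (2) and (3) of \defref{def-special-matrix-omega}, the last column of $\Omega(\lambda^{-1},n)$ is $(0,\ldots,0,1)^T$, so the last column of the product is the last column of $\Omega(\lambda,n)$, which is again $(0,\ldots,0,1)^T$. A Toeplitz matrix with this last column is forced to be $I_n$, giving (2).

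The main technical obstacle I expect will be the bookkeeping in the boundary cases of (1); the bulk entries fall out immediately from the shift-operator expansion, but the corners near the bottom-right of $\Omega(\lambda,n)$ — where the anchor $x_{n,n}=1$ propagates the recursion — need to be checked separately, and one has to be careful about interpreting $x_{i,j}$ with out-of-range indices as $0$. Once (1) is in hand, part~(2) is a short structural argument via \lemref{lem-commute-Jordan-blocks}, so no new difficulty arises there.
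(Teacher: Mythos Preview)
Your proposal is correct. Note, however, that in this paper the lemma is not proved at all: it is simply quoted from \cite[Lemma~3.2, Lemma~3.3]{GLM2}, so there is no in-paper argument to compare against. What you have written is a clean, self-contained verification.

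A few comments on the argument itself. Your reduction in part~(1) is exactly right: after cancelling $\Omega(\lambda,n)$, the identity $\lambda\,x_{i,j-1} + \lambda^{-1}\,x_{i+1,j} + x_{i+1,j-1} = 0$ is, up to the shift $j-1\mapsto j$ and clearing a factor of $\lambda$, precisely the recursion \eqref{eq-special-matrix-omega}. The boundary cases all check out: when $i=n$ or $j=1$ every surviving term vanishes by upper-triangularity, and the corner $(i,j)=(n-1,n)$ indeed forces $x_{n-1,n-1}=-\lambda^{-2}$, which the recursion delivers from $x_{n,n}=1$. So there is no hidden obstruction there.

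Your argument for part~(2) is particularly neat and worth highlighting: rather than a direct entrywise computation of $\Omega(\lambda,n)\Omega(\lambda^{-1},n)$, you exploit part~(1) to see that the product centralises $\mathrm{J}(\lambda,n)$, invoke \lemref{lem-commute-Jordan-blocks} to force Toeplitz shape, and then read off the entire matrix from its last column using conditions~(2)--(3) of \defref{def-special-matrix-omega}. This is a genuinely structural proof, and it avoids any binomial bookkeeping. One small point: \lemref{lem-commute-Jordan-blocks} is stated over $\H$, but since both $\Omega$'s lie in $\mathrm{GL}(n,\C)$ the product is automatically complex, so the Toeplitz conclusion applies regardless of whether $\lambda$ is real or not.
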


	In $\mathrm{SL}(n,\H)$, there exist Jordan blocks that are reversible but not strongly reversible. For example, the element $ \mathrm{J}(e^{\ib \theta},   1)  = (e^{\ib \theta}) \in {\rm SL }(1,\H)$,  where  $ \theta \in (0,\pi)$ is reversible, since $\jb e^{\ib \theta} = e^{-\ib \theta} \jb$.
	Now if $ (e^{\ib \theta})$ is strongly reversible, then  $a e^{\ib \theta} = e^{-\ib \theta}a$ form some  $(a) \in  {\rm SL }(1,\H)$ with $a^2=1$. Writing $a = x+ y \jb \in \H$,  $ x, y \in \C$,  it follows that $a= y \jb$. Using the condition $a^2=1$, we get $-|y|^2 =1$, which is a contradiction. Thus, $(e^{\ib \theta})$ is reversible but not strongly reversible in ${\rm SL }(1,\H)$.

	We generalize the above example  for an arbitrary $n$ as follows:
	
	\begin{lemma} \label{lem-rev-jodan-unit-modulus-GL(n, H)}
		Let  $A:= \mathrm{J}(\alpha,   n)$ be the Jordan block in $  {\rm SL }(n,\H)$,  where $ \alpha \in \C \setminus \{- 1, +1\}$  with non-negative imaginary part such that $|\alpha|= 1$.  Then $A$ is reversible but not strongly reversible in  ${\rm SL }(n,\H)$.
	\end{lemma}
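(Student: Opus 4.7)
The plan is to address reversibility and the failure of strong reversibility separately. For reversibility, the candidate reversing element is $g_0 := \Omega(\alpha,n)\,\jb$ from row 3 of Table \ref{table:1}. We verify directly: since $\jb\,N\,\jb^{-1} = \overline{N}$ for any complex matrix $N$, and $\overline{\alpha} = \alpha^{-1}$ because $|\alpha|=1$, one gets $\jb\,\mathrm{J}(\alpha,n)\,\jb^{-1} = \mathrm{J}(\alpha^{-1},n)$. Then \lemref{lem-rel-omega-complex}(\ref{cond-1-rel-omega-complex}) yields $g_0\,A\,g_0^{-1} = A^{-1}$. After rescaling $g_0$ by a positive real to enforce unit determinant, $A$ is reversible in $\mathrm{SL}(n,\H)$.

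For the failure of strong reversibility, I will argue by contradiction. Suppose some involution $g \in \mathrm{SL}(n,\H)$ satisfies $gAg^{-1} = A^{-1}$. Since $A$ is reversible, its reversing symmetry group is a degree-two extension of its centralizer in $\mathrm{GL}(n,\H)$, so $g = C\,g_0$ for some $C$ in the centralizer of $A$. The exclusion of $\alpha \in \{\pm 1\}$ together with $|\alpha| = 1$ forces $\alpha \in \C \setminus \R$, so \lemref{lem-commute-Jordan-blocks} identifies $C$ as a complex upper triangular Toeplitz matrix $\mathrm{Toep}_n(\mathbf{x})$. In particular, $M := C\,\Omega(\alpha,n)$ is an upper triangular complex matrix and $g = M\,\jb$.

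The involution condition $g^2 = I_n$ then becomes, via $\jb\,M = \bar{M}\,\jb$ and $\jb^{\,2} = -1$, the matrix equation $M\,\bar{M} = -I_n$. But $M$ is upper triangular, hence so is $M\,\bar{M}$, whose diagonal entries are $M_{i,i}\,\overline{M_{i,i}} = |M_{i,i}|^2 \geq 0$. This contradicts the required value $-1$, completing the proof. The crux is pinning down the form $g = M\,\jb$ with $M$ upper triangular over $\C$; once that is in place, the non-negativity of $|M_{i,i}|^2$ closes the argument immediately, so the main work in the proof is really the structural reduction via the centralizer description rather than a delicate calculation.
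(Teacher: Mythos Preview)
Your proposal is correct and follows essentially the same approach as the paper: both establish reversibility via $g_0=\Omega(\alpha,n)\,\jb$ using \lemref{lem-rel-omega-complex} and the identity $\jb\,P\,\jb^{-1}=\overline{P}$, and both refute strong reversibility by writing any reversing involution as (centralizer element)$\cdot g_0$, invoking \lemref{lem-commute-Jordan-blocks} to force the centralizer element to be complex Toeplitz, and then deriving a contradiction from the diagonal. The only cosmetic difference is that the paper isolates the $(1,1)$ entry $g_{1,1}=z\jb$ and argues $(g_{1,1})^2=1$ forces $|z|^2=-1$, whereas you phrase the same obstruction globally as $M\bar M=-\mathrm{I}_n$ with $M$ upper triangular over $\C$; these are the same contradiction read at different levels of granularity.
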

	\begin{proof} 	Write $A = \mathrm{J}(e^{ \ib \theta}, n) $,  where  $ \theta \in (0,\pi)$. 
		Since  $\jb z  = \bar{z} \jb$ for all $z \in \C$, we have $ \jb \, P = \overline{P} \,  \jb $ for all $P \in  {\rm GL }(n,\C)$, where $\overline{P}$ is the matrix obtained by taking conjugate of each entry of complex matrix $P$. This implies 
		\begin{equation}\label{eq-quaternion-matrix-conj-j}
			\jb \,  \Big( \mathrm{J}(e^{ \ib \theta}, n) \Big) \,  \jb^{-1} = \overline{ \mathrm{J}(e^{ \ib \theta},  n) }=  \mathrm{J}(e^{ -\ib \theta},  n).
		\end{equation}
		Consider   $\Omega(e^{ \ib \theta} , n) \, \jb  \in   \mathrm{SL}(n,\H)$, where  $\Omega(e^{ \ib \theta}, n) \in  \mathrm{GL}(n,\C)$ is as defined in \defref{def-special-matrix-omega}.   Then,  using \lemref{lem-rel-omega-complex}, we have 
		\begin{equation*}\label{eq-lemma-3.2}
			\Big(\Omega( e^{\ib \theta}, n) \, \jb \Big) A \Big(\Omega( e^{\ib \theta}, n) \, \jb \Big)^{-1}= A^{-1}.
		\end{equation*}
		Therefore, $A$ is reversible in $\mathrm{SL}(n,\H)$.
		
		Suppose that $A$ is strongly reversible in $\mathrm{SL}(n,\H)$. Then there exists an involution $g =[g_{i,j}]_{1\leq i,j \leq n} \in \mathrm{SL}(n,\H)$ such that $gAg^{-1}= A^{-1}$. 
		Note that the set of reversing elements of $A$ is a right coset of the centralizer of $A$; see \cite[Proposition 2.8]{OS}. Therefore, we have
		$$g  = f  \Omega( e^{\ib \theta}, n) \, \jb,$$ 
		where $f \in  \mathrm{GL}(n,\H)$ such that $fA=Af$. 
		Now,  using \lemref{lem-commute-Jordan-blocks}, we have 
		$$f =  \mathrm{Toep}_{n}(\textbf{x}),$$
		where  $\mathbf{x}:=(x_{1},x_{2},\dots, x_{n}) \in \C^{n}$ and  $\mathrm{Toep}_{n}(\textbf{x})  \in \mathrm{GL}(n, \C)$ is as defined in \defref{def-toeplitz}.   This implies
		$$g  =   \mathrm{Toep}_{n}(\textbf{x})  \Omega( e^{\ib \theta}, n) \, \jb.$$ 
		Thus, we get $g_{1,1} =z \jb$, where $z =  (-1)^{n-1}  e^{-2\ib (n-1) \theta} x_1  \in \C$.
		Since $g$ is an involution with an upper triangular form, we have $(g_{1,1})^2 = 1$. Therefore, $|z|^2 =  -1$, which is a contradiction. Hence, $A$ is not strongly reversible in $\mathrm{SL}(n,\H)$. This completes the proof.
	\end{proof}
	
	Although for every $\theta \in (0,\pi)$, the Jordan block $\mathrm{J}(e^{\ib \theta},n)$ is not strongly reversible in $\mathrm{SL}(n,\H)$, Jordan forms in $\mathrm{SL}(n,\H)$ containing such Jordan blocks may still be strongly reversible.

	\begin{lemma} \label{lem-str-rev-jodan-unit-modulus}
		Let  $A:= \mathrm{J}(\alpha, n) \oplus   \mathrm{J}(\alpha,  n)$ be the Jordan form in ${\rm SL }(2n,\H)$,  where $ \alpha \in \C \setminus \{-1, +1\}$  with non-negative imaginary part such that $|\alpha|= 1$.  Then $A$ is strongly reversible in  ${\rm SL }(2n,\H)$.
	\end{lemma}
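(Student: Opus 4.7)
The plan is to exhibit an explicit involution in $\mathrm{SL}(2n, \H)$ that conjugates $A$ to $A^{-1}$, following the recipe suggested by row 4 of Table~\ref{table:1}. Write $\alpha = e^{\ib\theta}$ with $\theta \in (0,\pi)$ and set $M := \Omega(\alpha, n)\, \jb \in \mathrm{GL}(n, \H)$. The preceding proof of \lemref{lem-rev-jodan-unit-modulus-GL(n, H)} (equation eq-lemma-3.2) already establishes that $M \, \mathrm{J}(\alpha, n) \, M^{-1} = \mathrm{J}(\alpha, n)^{-1}$, so $M$ is a reversing element for each $n \times n$ Jordan block appearing in $A$.

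The candidate involution will be
\[
g := \begin{pmatrix} 0 & M \\ M^{-1} & 0 \end{pmatrix} \in \mathrm{GL}(2n, \H).
\]
A direct $2 \times 2$ block multiplication shows $g^2 = \mathrm{I}_{2n}$, so $g$ is an involution. Another direct block multiplication gives
\[
g A g^{-1} \;=\; g A g \;=\; \begin{pmatrix} M\, \mathrm{J}(\alpha, n)\, M^{-1} & 0 \\ 0 & M^{-1}\, \mathrm{J}(\alpha, n)\, M \end{pmatrix} \;=\; \begin{pmatrix} \mathrm{J}(\alpha, n)^{-1} & 0 \\ 0 & \mathrm{J}(\alpha, n)^{-1} \end{pmatrix} \;=\; A^{-1},
\]
using the identity from the previous paragraph (and its inverse) on both diagonal blocks.

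It remains to note that $g \in \mathrm{SL}(2n, \H)$. This follows from the observation, already quoted after \thmref{thm-rev-SL(n,H)}, that every involution in $\mathrm{GL}(n, \H)$ automatically lies in $\mathrm{SL}(n, \H)$ by \cite[Theorem 5.9.2]{rodman}. Hence $A$ is strongly reversible in $\mathrm{SL}(2n, \H)$.

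There is essentially no obstacle here beyond identifying the correct conjugator; the work has already been done in \lemref{lem-rel-omega-complex} and in the reversibility computation of \lemref{lem-rev-jodan-unit-modulus-GL(n, H)}. The role of doubling the Jordan block is precisely to provide the extra off-diagonal slot needed to turn the odd-order-like obstruction $(g_{1,1})^2 = -|z|^2$ encountered in the single-block case into a genuine $2\times 2$ block involution whose square is the identity.
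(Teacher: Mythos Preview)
Your proof is correct and essentially identical to the paper's: both take the reversing element $M = \Omega(\alpha,n)\,\jb$ for the single block from \lemref{lem-rev-jodan-unit-modulus-GL(n, H)} and assemble the off-diagonal block involution $g = \begin{psmallmatrix} 0 & M \\ M^{-1} & 0 \end{psmallmatrix}$ as in row~4 of Table~\ref{table:1}. Your write-up is slightly more detailed (you spell out the block computations and the reason $g \in \mathrm{SL}(2n,\H)$), but the argument is the same.
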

	
	\begin{proof} 
		Write $A =    \begin{psmallmatrix}
			\mathrm{J}(e^{ \ib \theta},  n) & \\
			&    \mathrm{J}(e^{ \ib \theta},    n) 
		\end{psmallmatrix}$,  where  $ \theta \in (0,\pi)$. 
		Consider $  \Omega(e^{ \ib \theta},  n) \in {\GL}(n,\C)$  as defined in \defref{def-special-matrix-omega}. Let $g =    \begin{psmallmatrix}
			&  \Omega(e^{\ib \theta}, n) \, \jb \\
			\Big( \Omega(e^{\ib \theta},  n)  \, \jb \Big)^{-1} &  
		\end{psmallmatrix}$.  Note that $gAg^{-1} = A^{-1}$ if and only if $  \Big( \Omega( e^{ \ib \theta}, n) \, \jb \Big)  \,  \Big( \mathrm{J}(e^{ \ib \theta}, n) \Big) \,    \Big( \Omega( e^{ \ib \theta}, n) \,  \jb \Big)^{-1} = \Big(\mathrm{J}(e^{ \ib \theta}, n)\Big)^{-1}$. Moreover, $g$ is an involution in $ {\SL}(2n,\H)$. The proof now follows from   \lemref{lem-rel-omega-complex}  and Equation \eqref{eq-quaternion-matrix-conj-j}.
	\end{proof}

	In the following result, we provide the necessary and sufficient conditions for the strong reversibility of elements of $ \mathrm{SL}(n, \mathbb{H})$ that have only the eigenvalue $ \alpha \in \C \setminus \{\pm 1\}$ with a non-negative imaginary part, where $|\alpha|= 1$. We used a similar line of argument as in the proof of \cite[Lemma 4.5]{GLM2}, which is based on the notion of the Weyr canonical form and the structure of the set of reversing symmetries.
	
	\begin{proposition}\label{prop-main-str-rev-quaternion-unit-even}
		Let $A \in \mathrm{SL}(n, \mathbb{H})$ be an element having only eigenvalue $e^{\ib \theta}$, where $\theta \in (0,\pi)$, and $\mathbf{d}(n) = [d_1^{t_{d_1}}, \dots, d_s^{t_{d_s}}]$ be  the corresponding partition in the Jordan decomposition of $A$. Then  $A$ is strongly reversible in $\mathrm{SL}(n, \mathbb{H})$ if and only if  $t_{d_\ell}$ is even for all $1 \leq \ell \leq s$.
	\end{proposition}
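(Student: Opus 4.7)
The plan is to treat the two directions of the equivalence separately.

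\emph{Sufficiency.} Assume every $t_{d_\ell}$ is even. After conjugation we may take $A$ to be in Jordan form $\bigoplus_{\ell=1}^{s} \mathrm{J}(e^{\ib\theta}, d_\ell)^{\oplus t_{d_\ell}}$. Since each $t_{d_\ell}$ is even, we can pair identical Jordan blocks to rewrite $A$ as a block-diagonal direct sum of $\sum_{\ell} t_{d_\ell}/2$ blocks of the form $\mathrm{J}(e^{\ib\theta}, d_\ell) \oplus \mathrm{J}(e^{\ib\theta}, d_\ell)$. Each such pair admits a reversing involution by \lemref{lem-str-rev-jodan-unit-modulus}, and the block-diagonal sum of these involutions is an involution in $\mathrm{SL}(n,\H)$ reversing $A$.

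\emph{Necessity.} Let $g \in \mathrm{SL}(n,\H)$ be an involution with $gAg^{-1}=A^{-1}$. The strategy is to extract from $g$ a \emph{quaternionic structure} on a filtration of the $e^{\ib\theta}$-generalized eigenspace of $A$, and to force the parities through even-dimensionality of the quotients. View $\H^n$ as a right $\C$-module. Since right scalar multiplication commutes with any left matrix action, the operator $T := A - e^{\ib\theta} I$ (where $e^{\ib\theta} I$ denotes right multiplication by $e^{\ib\theta}$) is $\C$-linear. Set
\[
  V_k := \ker(T^k), \qquad U_k := T^{k-1}(V_k) \subseteq V_1.
\]
A direct count via the Jordan decomposition shows $\dim_\C (U_k / U_{k+1})$ equals the number of Jordan blocks of $A$ of size exactly $k$; this is $t_{d_\ell}$ when $k = d_\ell$ and zero otherwise.

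Define $\phi : V_1 \to V_1$ by $\phi(v) := gv \cdot \jb$. Using $\jb \, e^{\ib\theta} = e^{-\ib\theta}\jb$ together with $gAg^{-1}=A^{-1}$ one checks that $\phi$ maps $V_1$ into itself; $\C$-antilinearity comes from $\jb c = \bar{c}\jb$ for $c \in \C$; and $g^2 = I$ with $\jb^2 = -1$ yields $\phi^{2} = -I_{V_{1}}$. Thus $\phi$ is a quaternionic structure on $V_1$. The essential technical step is to show $\phi(U_k) \subseteq U_k$ for every $k$. This follows from the commutation relations
\[
  gT = (A^{-1} - e^{\ib\theta} I)\, g, \qquad T(w\jb) = \bigl((A - e^{-\ib\theta} I) w\bigr) \jb,
\]
combined with the factorization $A^{-1} - e^{\ib\theta} I = -e^{\ib\theta} A^{-1}(A - e^{-\ib\theta} I)$; iterating these gives an identity of the form $\phi(T^{k-1}u) = (-e^{\ib\theta})^{k-1}\, T^{k-1}\bigl(A^{-(k-1)}\phi(u)\bigr)$ for $u \in V_k$, whose right-hand side visibly lies in $U_k$.

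Once $\phi$-invariance of the subfiltration $\{U_k\}$ is established, $\phi$ descends to a $\C$-antilinear map on each quotient $U_k / U_{k+1}$ still satisfying $\phi^2 = -I$, giving a quaternionic structure on each quotient. This forces $\dim_\C (U_k / U_{k+1})$ to be even, and since these dimensions coincide with the multiplicities $t_{d_\ell}$, every $t_{d_\ell}$ is even. The main obstacle is the algebraic bookkeeping for $\phi(U_k) \subseteq U_k$, where one must carefully juggle left multiplication by $A$ and $g$ against right multiplication by $e^{\pm \ib\theta}$ and $\jb$; the remaining steps are standard linear algebra.
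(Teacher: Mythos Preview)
Your proposal is correct, and the sufficiency direction matches the paper exactly (both simply invoke \lemref{lem-str-rev-jodan-unit-modulus}). For the necessity direction, however, you take a genuinely different route from the paper.

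The paper works in coordinates: it passes to the Weyr canonical form $A_W$ of $A$, writes down an explicit reverser $\tau=\Omega_W\,\jb$, and uses \propref{prop-centralizer-basic-Weyr-block} to describe every element of the centralizer of $A_W$ as a block upper-triangular complex matrix whose diagonal blocks are themselves block upper-triangular with corner blocks $P_k\in\mathrm{GL}(t_{d_k},\C)$. Any involutory reverser $h$ is then $f\tau$ with $f$ in this centralizer, and reading off the $(1,1)$-block forces $(Q_k\jb)^2=\mathrm{I}$, i.e.\ $Q_k\overline{Q_k}=-\mathrm{I}_{t_{d_k}}$; taking determinants gives $|\det Q_k|^2=(-1)^{t_{d_k}}$, so $t_{d_k}$ is even. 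Your argument is the coordinate-free version of exactly this phenomenon: the map $\phi(v)=gv\cdot\jb$ is precisely the abstract quaternionic structure whose matrix incarnation on the graded piece $U_{d_k}/U_{d_k+1}$ is $Q_k\jb$, and ``$\phi^2=-\mathrm{I}$ forces even $\C$-dimension'' is the invariant statement behind the determinant trick. What you gain is a short, intrinsic proof that avoids the Weyr machinery entirely; what the paper gains is an explicit description of every reverser, which feeds into Table~\ref{table:1} and the later sections. Two small points worth tightening in your write-up: the identity you state should carry the factor $(-e^{-\ib\theta})^{k-1}$ rather than $(-e^{\ib\theta})^{k-1}$ (harmless, since only nonvanishing matters), and you should remark that the formula $\phi(v)=gv\cdot\jb$ extends to all of $\H^n$ and preserves each $V_k$ (this drops out of the same commutation relation), so that $A^{-(k-1)}\phi(u)\in V_k$ and the right-hand side of your displayed identity indeed lands in $U_k$.
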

	
	\begin{proof} ($\Rightarrow$) In view of  \lemref{lem-Jordan-M(n,H)}, up to conjugacy, we can assume $A$ in the Jordan form with complex entries given by Equation \eqref{equ-Jordan-M(n,H)}. 
		Let $A_{W}$ denote the Weyr form of $A \in \mathrm{GL}(n,\C)$. Then $A_{W} = \delta A \delta^{-1}$ for some $\delta \in \mathrm{GL}(n,\C)$.  
		Using   \lemref{lem-relation-both-partition} and \remref{rem-duality-forms}, the partition $\overline{{\d}}(n) $ representing the Weyr form $A_{W}$ of $A$ is given by:
		\small	$$ \overline{\d}(n) = [ (t_{d_1}+t_{d_2}+\cdots+t_{d_s})^{d_s},  (t_{d_1}+t_{d_2}+\cdots+t_{d_{s-1}})^{d_{s-1}-d_s}, \cdots,  (t_{d_1}+t_{d_2})^{d_2 -d_3},(t_{d_1})^{d_1 -d_2}].$$
		Therefore,  $A_{W}  \in \mathrm{GL}(n,\C)$ is a block matrix with $(d_1)^2$ many blocks and for all $1\leq i \leq d_1$, the size $n_i$ of $i$-th diagonal block is given by
		$$ n_i =  \begin{cases}
			t_{d_1}+t_{d_2}+\cdots+t_{d_s}  & \text{if $1 \leq i \leq d_s$,}\\
			t_{d_1}+t_{d_2}+\cdots+t_{d_{s-r-1} }& \text{if ${d_{s-r}} +1  \leq i \leq {d_{s-r-1}}$},  \text{where $0 \leq r \leq s-2$.}
		\end{cases}$$ 
		This implies that $(i,j)$-th block of $A_{W}$  has the size $ n_i \times  n_j$; see \cite[Section 2.2]{GLM2}. Further, $(i,j)$-th block of  the Weyr form $A_{W}$ and its inverse $(A_{W} )^{-1}$  can be given as follows:
		$$ (A_{W})_{i,j} =  \begin{cases}
			e^{\ib \theta}	\mathrm{I}_{n_i} & \text{if $j=i$ }\\
			\mathrm{I}_{n_i \times n_{i +1}} & \text{if $j =i+1$}\\
			\mathrm{O}_{n_i \times n_{j}}  & \text{otherwise}
		\end{cases}, \hbox{ and } $$
		$$((A_{W} )^{-1})_{i,j} =  \begin{cases}
			e^{-\ib \theta}\mathrm{I}_{n_i}  & \text{if $j=i$ }\\
			(-1)^k  	e^{-(k+1)\ib \theta} \mathrm{I}_{n_i \times n_{i +k}} & \text{if $j =i+k$},  1 \leq k \leq d_1 - i\\
			\mathrm{O}_{n_i \times n_{j}}  & \text{otherwise}
		\end{cases},$$
		where $1\leq i,  j \leq d_1$ and $\mathrm{O}_{n_i \times n_{j}}$ denotes the $n_i \times n_{j}$ zero matrix.
		Consider $\Omega_{W} := [ X_{i,j} ]_{d_1 \times d_1}  \in   \mathrm{GL}(n,\C)$  such that
		\begin{equation}
			X_{i,j} =\begin{cases}
				\mathrm{O}_{n_i \times n_{j}}  & \text{if $i>j $}\\
				\mathrm{O}_{n_i \times n_{j}}  & \text{if $j =d_1,  i  \neq d_1$}\\
				(-1)^{d_1-i}  \, \Big( e^{-2(d_1-i) \ib \theta}\Big) \,  \mathrm{I}_{n_i}& \text{if $j=i$ }\\
				(-1)^{d_1-i} \,  \binom{d_1-i-1}{j-i}  \,  \Big(e^{-(2d_1+i+j) \ib \theta} \Big) \,  \mathrm{I}_{n_i \times n_{j}}  & \text{if $ j>i, j  \neq d_1$ }
			\end{cases},
		\end{equation}
		where  $\binom{d_1-i-1}{j-i} $ denotes the binomial coefficient.  Let $\tau = \Omega_{W}  \, \jb  \in  \mathrm{GL}(n,\H)$. 
		Then by using a similar argument as used in \lemref{lem-rev-jodan-unit-modulus-GL(n, H)} and \cite[Lemma 3.2]{GLM2}, we have 
		\begin{equation}\label{eq-rev-Weyr-unit-H}
			\tau A_{W} \tau^{-1}=(A_{W} )^{-1}.
		\end{equation}
		Let $f=[P_{i,j}]_{d_1 \times d_1} \in \mathrm{GL}(n,\H)$ be an $n \times n$ matrix  commuting with  Weyr form $A_{W} \in \mathrm{GL}(n,\C)$, where both $f$ and $A_{W}$ are blocked according to the partition $\overline{{\d}}(n) $. Then, using a similar argument as in  \lemref{lem-commute-Jordan-blocks},  we can conclude that  $f$ is a matrix with complex entries. Thus, we have $f \in \mathrm{GL}(n,\C)$ such that $f 	(e^{\ib \theta}	\mathrm{I}_{n}) =  	(e^{\ib \theta}	\mathrm{I}_{n}) f$.  Then $fA_W =A_W f$ implies that $fN =Nf$, where $N= A_W - e^{\ib \theta}	\mathrm{I}_{n}$ is a nilpotent basic Weyr matrix given by the partition $ \overline{\d}(n) $.  Therefore,  using \propref{prop-centralizer-basic-Weyr-block}, we have that $f \in \mathrm{GL}(n,\C)$ is an upper triangular block matrix such that $i$-th diagonal block $P_{i,i}$ of $f$  is given as follows:
		\begin{enumerate} 
			\item if  $1 \leq i \leq d_s$, then $ P_{i,i}=  \begin{psmallmatrix}
				{P}_{1} &\ast  & \ast &  \ast  & \cdots  &  \ast \\
				& {P}_{2} &  \ast   & \cdots  & \cdots  &   \ast \\
				&  &\ddots   & \ddots  && \vdots \\
				& & & \ddots & \ddots   & \vdots \\
				& & &  & {P}_{{s-1}} &  \ast \\
				&  &  &  &  &{P}_{s}
			\end{psmallmatrix}$,
			
			\item if  ${d_{s-r}} +1  \leq i \leq {d_{s-r-1}}$, then $ P_{i,i}=  \begin{psmallmatrix}
				{P}_{1} &\ast  & \ast &  \ast  & \cdots  &  \ast \\
				& {P}_{2} &  \ast   & \cdots  & \cdots  &   \ast \\
				&  &\ddots   & \ddots  && \vdots \\
				& & & \ddots & \ddots   & \vdots \\
				& & &  & {P}_{{s-r-2}} &  \ast \\
				&  &  &  &  &{P}_{s-r-1}
			\end{psmallmatrix},$
			
			\item  if  $d_2 +1  \leq i \leq d_1$, then $P_{i,i} = P_1$,
		\end{enumerate}
		where  $0 \leq r \leq s-3$ and $P_k \in \mathrm{GL}(t_{d_k}, \C)$ for all $ 1 \leq k \leq s$.

		Since $A$ is strongly reversible in $\mathrm{SL}(n, \mathbb{H})$, there exists  an involution $g$ in $\mathrm{SL}(n,\H)$  such that  $gAg^{-1} =A^{-1}$. Consider $h = [Y_{i,j}]_{d_1 \times d_1}:= \delta g \delta ^{-1}$ in $\mathrm{SL}(n,\H)$. Then $h$ is an involution in $\mathrm{SL}(n,\H)$ such that 
		\begin{equation}\label{eq-relation-reverser-Weyr-Jordan-unipotent-H}
			h A_{W} h^{-1} =(A_{W})^{-1}.
		\end{equation}
		Note that the set of reversing elements of $A_W$ is a right coset of the centralizer of $A_W$. Therefore, using Equations   \eqref{eq-rev-Weyr-unit-H} and \eqref{eq-relation-reverser-Weyr-Jordan-unipotent-H}, we have
		$$h =  f  \tau.$$
		This implies that $h \in \mathrm{GL}(n,\H)$ is an upper triangular block matrix, and the first diagonal block of $h$ is given by  $$Y_{1, 1} =   \begin{psmallmatrix}
			{Q}_{1} \,  \jb &\ast  & \ast &  \ast  & \cdots  &  \ast \\
			& {Q}_{2}  \, \jb  &  \ast   & \cdots  & \cdots  &   \ast \\
			&  &\ddots   & \ddots  && \vdots \\
			& & & \ddots & \ddots   & \vdots \\
			& & &  & {Q}_{{s-1}} \,   \jb &  \ast \\
			&  &  &  &  &{Q}_{s} \, \jb 
		\end{psmallmatrix},$$
		where $	Q_k= 	(-1)^{d_1-1}  \, \Big( e^{-2(d_1-1) \ib \theta} \Big) P_{k}   \in \mathrm{GL}(t_{d_k}, \C)$ for all $1 \leq k \leq s$. Since $h$ is an involution, we have $Y_{1, 1}$, and hence $Q_k \jb$ is an involution for all $1 \leq k \leq s$.  Then we have
		$$ Q_k \overline{Q_k}= -\mathrm{I}_{t_{d_k}}.$$
		This implies  that $|\mathrm{det}( Q_k)|^2 = (-1)^{t_{d_k}}$. Therefore, $t_{d_k}$ is even for all $1 \leq k \leq s$. 
		
		($\Leftarrow$) The proof of the converse part of the lemma follows from \lemref{lem-str-rev-jodan-unit-modulus}.
		This completes the proof.
	\end{proof}

	\subsection{Proof of \thmref{thm-strong-rev-SL(n,H)}}
	($\Leftarrow$)  Recall that  if $g \in {\rm GL }(n,\H)$ is an involution, then $ g \in {\rm SL }(n,\H)$; see \defref{def-det-H}.   The proof now follows from \cite[Theorem 5.4]{GLM1}
	
	($\Rightarrow$) Let $A$ be a strongly reversible element in ${\rm SL }(n,\H)$. If $A$ has no non-real eigenvalues of unit modulus, the proof is complete.
	Now, assume that $A$ has a non-real unit modulus eigenvalue $\mu$, with multiplicity $m \geq 1$. If $m=n$, then the proof follows from  \propref{prop-main-str-rev-quaternion-unit-even}. 
	
	Assume that $1\leq m <n$. 
	Write $\mu= e^{\ib \theta_o}$ for some  $\theta_o \in  (0,\pi)$.  
	Using  \lemref{lem-Jordan-M(n,H)}, we can express $A$ as
	$$A =	\begin{psmallmatrix}
		A_1  &  \\
		&A_2
	\end{psmallmatrix}, $$
	where $A_1 \in \mathrm{GL}(m,\H)$  and  $A_2 \in \mathrm{GL}(n-m,\H)$ such that $1\leq m <n$. Moreover, $A_1$ has a single eigenvalue $e^{\ib \theta_o}$, and for each eigenvalue $\lambda$ of $A_2$, we have $[e^{\ib \theta_o}] \neq [\lambda]$. Let $B \in \mathrm{GL}(n,\H)$ such that $BA=AB$. Then \lemref{lem-commuting-block-matrices} implies that $B$ has the following form:
	$$B= \begin{psmallmatrix}
		B_{1}  & 	  \\
		&	B_{2} 
	\end{psmallmatrix}, $$
	where $ B_{1}  \in \mathrm{GL}(m,\H) $ and $B_{2} \in \mathrm{GL}(n-m,\H)$ such that  $B_1A_1=A_1B_1$ and $B_2A_2=A_2B_2$.  
	Since $A$ is reversible, using \thmref{thm-rev-SL(n,H)}, \lemref{lem-rev-jodan-unit-modulus-GL(n, H)}, and Table \ref{table:1}, we can find a  reversing element $h$ for $A$ which has the form
	$$h= \begin{psmallmatrix}
		h_1  & 	  \\
		&	h_2
	\end{psmallmatrix}, $$
	where $ h_1   \in \mathrm{GL}(m,\H) $ and $h_2 \in \mathrm{GL}(n-m,\H)$ such that $h_i A_i(h_i)^{-1}= (A_i)^{-1}$ for  all $ i \in \{1,2\} $.
	
	Let   $g \in \mathrm{SL}(n,\H)$ be an involution  such that $gAg^{-1} = A^{-1}$.  Since the set of reversing elements of $A$ is a right coset of the centralizer of $A$, we have that $g$ has the following form
	$$g= \begin{psmallmatrix}
		g_1  & 	  \\
		&	g_2
	\end{psmallmatrix}, $$
	where $ g_1   \in \mathrm{GL}(m,\H) $ and $g_2 \in \mathrm{GL}(n-m,\H)$  are involutions such that $g_i A_i(g_i)^{-1}= (A_i)^{-1}$ for $ i \in \{1,2\} $. Thus, we have  an involution $ g_1   \in \mathrm{GL}(m,\H) $ such that $g_1A_1(g_1)^{-1}= (A_1)^{-1}$.  The proof of the theorem now follows from \propref{prop-main-str-rev-quaternion-unit-even}.
	\qed

	\subsection{Proof of Table \ref{table:1}} \label{sec-proof-table}
	The proof of Table \ref{table:1}  follows from \cite[Table 1]{GLM2},  \lemref{lem-rev-jodan-unit-modulus-GL(n, H)} and  \lemref{lem-str-rev-jodan-unit-modulus}.
	\qed	
	
	\section{Product of two skew-involutions in ${\rm SL }(n,\H)$}\label{sec-prod-ske-inv}
	In this section, we demonstrate that every reversible element in ${\rm SL }(n,\H)$  can be expressed as a product of two skew-involutions in ${\rm SL }(n,\H)$. In view of  \thmref{thm-rev-SL(n,H)} which classifies the reversible elements in ${\rm SL }(n,\H)$, it is sufficient to consider certain Jordan forms in ${\rm SL }(n,\H)$. Note the following two results, which will be used in proving \thmref{thm-rev-SL-prod-inv-PSL}. 
	
	\begin{lemma}\label{lem-prod-skew-inv-unit-mod}
		Let  $A:=\mathrm{J}(\mu,  n)$ be the Jordan block in $ {\rm SL }(n,\H)$,  where $ \mu \in \C$  with non-negative imaginary part such that $|\mu|= 1$.  
		Then there exists a skew-involution  $g \in \mathrm{SL}(n,\H) $  such that $g A  g^{-1}= A^{-1}$.
	\end{lemma}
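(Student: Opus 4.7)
The plan is to propose the single candidate $g := \Omega(\mu,n)\,\jb$, where $\Omega(\mu,n)$ is the upper-triangular complex matrix from \defref{def-special-matrix-omega}. This element already appears in row $3$ of Table \ref{table:1} as a reversing symmetry for a non-real unit-modulus Jordan block, and the point will be that the same element uniformly handles $\mu=\pm 1$ as well (where Table \ref{table:1} instead lists $\Omega(\mu,n)$, which is merely an involution) and, more importantly, is actually a \emph{skew}-involution for every $\mu$ on the unit circle.

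First I would verify that $gAg^{-1}=A^{-1}$. Since $\jb P = \bar P\, \jb$ for every complex matrix $P$, one has $\jb\,\mathrm{J}(\mu,n)\,\jb^{-1}=\mathrm{J}(\bar\mu,n)=\mathrm{J}(\mu^{-1},n)$, using $|\mu|=1$. Part \eqref{cond-1-rel-omega-complex} of \lemref{lem-rel-omega-complex} then yields $\Omega(\mu,n)\,\mathrm{J}(\mu^{-1},n)\,\Omega(\mu,n)^{-1}=\mathrm{J}(\mu,n)^{-1}$, so $gAg^{-1}=A^{-1}$, uniformly in $\mu$.

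Next I would compute
$$ g^2 \;=\; \Omega(\mu,n)\,\jb\,\Omega(\mu,n)\,\jb \;=\; -\,\Omega(\mu,n)\,\overline{\Omega(\mu,n)}. $$
The crucial identity is $\overline{\Omega(\mu,n)}=\Omega(\mu,n)^{-1}$ when $|\mu|=1$. This drops out of the recursion \eqref{eq-special-matrix-omega}: the seed entries $x_{n,n}=1$ and $x_{i,n}=0$ are real, so conjugating the recursion simply replaces $\mu$ by $\bar\mu$, giving $\overline{\Omega(\mu,n)}=\Omega(\bar\mu,n)$. Combining with $\bar\mu=\mu^{-1}$ and part \eqref{cond-2-rel-omega-complex} of \lemref{lem-rel-omega-complex} gives $\overline{\Omega(\mu,n)}=\Omega(\mu^{-1},n)=\Omega(\mu,n)^{-1}$, hence $g^2=-\mathrm{I}_n$. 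Finally, for membership in $\mathrm{SL}(n,\H)$: from $g^2=-\mathrm{I}_n$ we get $\det(g)^2=\det(-\mathrm{I}_n)=1$, and since quaternionic determinants are non-negative real numbers (\defref{def-det-H}), necessarily $\det(g)=1$.

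The only substantive obstacle is the identity $\overline{\Omega(\mu,n)}=\Omega(\mu^{-1},n)$ on the unit circle, which is really just a short induction on the recursion in \defref{def-special-matrix-omega}; everything else is assembly of facts already established earlier in the paper.
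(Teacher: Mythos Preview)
Your proposal is correct and follows essentially the same approach as the paper: both take $g=\Omega(\mu,n)\,\jb$, verify $gAg^{-1}=A^{-1}$ via \lemref{lem-rel-omega-complex}\eqref{cond-1-rel-omega-complex} together with $\jb\,\mathrm{J}(\mu,n)\,\jb^{-1}=\mathrm{J}(\bar\mu,n)=\mathrm{J}(\mu^{-1},n)$, and obtain $g^2=-\mathrm{I}_n$ from $\overline{\Omega(\mu,n)}=\Omega(\mu^{-1},n)=\Omega(\mu,n)^{-1}$ via \lemref{lem-rel-omega-complex}\eqref{cond-2-rel-omega-complex}. Your write-up is slightly more explicit than the paper's in justifying $\overline{\Omega(\mu,n)}=\Omega(\bar\mu,n)$ from the recursion and in checking $\det(g)=1$, but the argument is the same.
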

	\begin{proof} Write $A:=\mathrm{J}(e^{\ib \theta}, n )$, where $\theta \in [0, \pi]$.
		Let $g:= \Omega(e^{ \ib \theta} , n) \, \jb  \in   \mathrm{GL}(n,\H)$, where  $\Omega(e^{ \ib \theta}, n) \in  \mathrm{GL}(n,\C)$ is as defined in \defref{def-special-matrix-omega}.   Then,  using \lemref{lem-rel-omega-complex} (\ref{cond-1-rel-omega-complex}), we have $gAg^{-1}= A^{-1}$; see also \lemref{lem-rev-jodan-unit-modulus-GL(n, H)}.
		In view of \lemref{lem-rel-omega-complex} (\ref{cond-2-rel-omega-complex}), we have 
		$$\Omega(e^{ \ib \theta} , n) \Omega(e^{- \ib \theta} , n) = \mathrm{I}_{n}.$$
		This implies that $$g^2 =  \Omega(e^{ \ib \theta} , n) \, \jb  \Omega(e^{ \ib \theta} , n) \, \jb = \Omega(e^{ \ib \theta})  \overline{ \Omega(e^{ \ib \theta} , n)} (\jb^2)  = - \Omega(e^{ \ib \theta} , n) \Omega(e^{- \ib \theta} , n) = -\mathrm{I}_{n}.$$
		This proves the lemma.
	\end{proof}
	
	\begin{lemma} \label{lem-prod-skew-inv-non-unit-mod}
		Let  $A:= \mathrm{J}(\lambda, n) \  \oplus \ \mathrm{J}(\lambda^{-1}, n)$  be the Jordan form in $ {\SL}(2n,\H)$,   where  $\lambda  \in \C \setminus \{0\} $  with non-negative imaginary part  such that  $ |\lambda| \neq 1$. Then there exists a skew-involution  $g $ in $\mathrm{SL}(2n,\H) $  such that $g A  g^{-1}= A^{-1}$.
	\end{lemma}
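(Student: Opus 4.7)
The plan is to exhibit such a skew-involution explicitly, mimicking the construction used for the reversing element in Row 2 of Table \ref{table:1} but twisted by a sign so that its square becomes $-\mathrm{I}_{2n}$ instead of $\mathrm{I}_{2n}$.

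Concretely, I propose to set
\[
g \,:=\, \begin{pmatrix} 0 & \Omega(\lambda, n) \\ -\bigl(\Omega(\lambda, n)\bigr)^{-1} & 0 \end{pmatrix} \in \mathrm{GL}(2n,\H),
\]
with $\Omega(\lambda,n) \in \mathrm{GL}(n,\C)$ as in \defref{def-special-matrix-omega}. The verification then splits into three short steps. First, a block computation together with \lemref{lem-rel-omega-complex}\eqref{cond-2-rel-omega-complex} (which says $\Omega(\lambda,n)^{-1} = \Omega(\lambda^{-1},n)$ and, in particular, that $\Omega(\lambda,n)$ and its inverse multiply to the identity) immediately yields $g^{2} = -\mathrm{I}_{2n}$, so $g$ is a skew-involution. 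Second, one checks $g \in \mathrm{SL}(2n,\H)$: since the quaternionic determinant is non-negative and $\det(g)^{2} = \det(g^{2}) = \det(-\mathrm{I}_{2n}) = 1$, we must have $\det(g) = 1$.

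Third, the identity $gAg^{-1} = A^{-1}$ reduces, after expanding the $2\times 2$ block product and comparing off-diagonal entries, to the two relations
\[
\Omega(\lambda,n)\, \mathrm{J}(\lambda^{-1},n) \;=\; \mathrm{J}(\lambda,n)^{-1}\, \Omega(\lambda,n), \qquad \Omega(\lambda,n)^{-1}\, \mathrm{J}(\lambda,n) \;=\; \mathrm{J}(\lambda^{-1},n)^{-1}\, \Omega(\lambda,n)^{-1},
\]
the second being the inverse of the first. Both are precisely \lemref{lem-rel-omega-complex}\eqref{cond-1-rel-omega-complex} (applied to $\lambda$, and to $\lambda$ again after inverting), so the intertwining is automatic. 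The extra minus sign in the $(2,1)$ block of $g$ appears on both sides of $gA = A^{-1}g$ and therefore does not affect the verification.

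There is no real obstacle here: the nontrivial content is already packaged in \lemref{lem-rel-omega-complex}, and the only new observation is that inserting a minus sign into one off-diagonal block turns the involutive reversing element of Table \ref{table:1} into a skew-involution while preserving the intertwining with $A$. The argument is purely formal and does not require the hypothesis $|\lambda|\neq 1$ anywhere beyond ensuring we are in the case of this Jordan form; the computation would go through verbatim for any $\lambda\neq 0$.
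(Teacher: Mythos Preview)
Your proof is correct and essentially identical to the paper's: the paper defines $g = \begin{psmallmatrix} & \Omega(\lambda,n) \\ -\Omega(\lambda^{-1},n) & \end{psmallmatrix}$, which by \lemref{lem-rel-omega-complex}\eqref{cond-2-rel-omega-complex} is exactly your matrix, and then invokes the two parts of \lemref{lem-rel-omega-complex} just as you do. The only difference is cosmetic---you write $\Omega(\lambda,n)^{-1}$ where the paper writes $\Omega(\lambda^{-1},n)$, and you add the short determinant argument for $g\in\mathrm{SL}(2n,\H)$ that the paper leaves implicit.
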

	
	\begin{proof} 
		Write $A= \begin{psmallmatrix}
			\mathrm{J}(\lambda, n) &  \\
			&  \mathrm{J}(\lambda^{-1}, n) 
		\end{psmallmatrix}  \in  {\SL}(2n,\H)$.
		Let $g =    \begin{psmallmatrix}
			& \Omega(\lambda,  n) \\
			-	\Omega(\lambda^{-1},  n) &  
		\end{psmallmatrix},$ where $  \Omega(\lambda,  n) \in {\GL}(n,\C)$  is as defined in \defref{def-special-matrix-omega}.  Then  \lemref{lem-rel-omega-complex} (\ref{cond-2-rel-omega-complex}) implies that $g$ is a skew-involution in $ {\SL}(2n,\H)$.  Moreover, $gAg^{-1} = A^{-1}$ if and only if $  \Omega( \lambda, n) \, \mathrm{J}(\lambda^{-1}, n) = \Big(\mathrm{J}(\lambda, n)\Big)^{-1} \,  \Omega( \lambda, n).$  The proof now follows from \lemref{lem-rel-omega-complex} (\ref{cond-1-rel-omega-complex}).
	\end{proof}

	\subsection{Proof of \thmref{thm-rev-SL-prod-inv-PSL}}
	Note that an element $A \in {\rm SL }(n,\H)$ can be written as a product of two skew-involutions if and only if there exists a skew-involution $g \in \mathrm{SL}(n,\H) $  such that $g A  g^{-1}= A^{-1}$.
	The proof now follows from \thmref{thm-rev-SL(n,H)}, \lemref{lem-Jordan-M(n,H)},  \lemref{lem-prod-skew-inv-unit-mod}, and \lemref{lem-prod-skew-inv-non-unit-mod}
	\qed
	
	\section{Reversibility  in  ${\rm PSL}(n,\H)$} \label{sec-rev-PSL(n,H)}
	In this section, we will investigate the reversibility problem in the quaternionic projective linear group $\mathrm{PSL}(n,\H):= \mathrm{SL}(n,\H)/\{\pm \mathrm{I}_{n}\}$. Recall that 
	for every $g,h \in \mathrm{SL}(n,\H)$,  we have
	$$ [g]=[h]  \Leftrightarrow  g=\pm h.$$
	Moreover, every skew-involution  in $\mathrm{SL}(n,\H)$ is an  involution in $\mathrm{PSL}(n,\H)$. 
	The following result provides the relation between reversibility in the groups $\mathrm{SL}(n,\H)$ and $\mathrm{PSL}(n,\H) $, cf. \cite[Lemma 4.1]{DGL}.
	
	\begin{lemma}\label{lem-rev-PSL-eq-conditions}
		An element $[g] \in \mathrm{PSL}(n,\H)$ is reversible if and only if there exists $h \in \mathrm{SL}(n,\H)$ such that either of the following conditions holds.
		\begin{enumerate}
			\item $hgh^{-1}= g^{-1}$.
			\item $hgh^{-1}= -g^{-1}$.
		\end{enumerate}
	\end{lemma}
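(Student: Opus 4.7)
The plan is to prove this lemma by directly unwinding the reversibility condition in $\mathrm{PSL}(n,\H)$ through the quotient map $\mathrm{SL}(n,\H) \to \mathrm{PSL}(n,\H)$, whose kernel is $\{\pm \mathrm{I}_n\}$. The key observation I will use throughout is the equivalence $[a] = [b]$ in $\mathrm{PSL}(n,\H)$ if and only if $a = \pm b$ in $\mathrm{SL}(n,\H)$, which converts each projective equation into exactly two equations in $\mathrm{SL}(n,\H)$.

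For the forward direction, I will suppose that $[g]$ is reversible in $\mathrm{PSL}(n,\H)$, so there exists $[h] \in \mathrm{PSL}(n,\H)$ satisfying $[h][g][h]^{-1} = [g]^{-1}$. Selecting any lift $h \in \mathrm{SL}(n,\H)$ of $[h]$, I will rewrite $[h][g][h]^{-1} = [hgh^{-1}]$ and $[g]^{-1} = [g^{-1}]$, so the projective identity reduces to $[hgh^{-1}] = [g^{-1}]$. Invoking the kernel description $\{\pm \mathrm{I}_n\}$, this is equivalent to one of $hgh^{-1} = g^{-1}$ or $hgh^{-1} = -g^{-1}$ holding in $\mathrm{SL}(n,\H)$.

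For the converse, if some $h \in \mathrm{SL}(n,\H)$ satisfies either of the two equations, I will simply project back to $\mathrm{PSL}(n,\H)$ and use $[\pm g^{-1}] = [g^{-1}] = [g]^{-1}$ to conclude $[h][g][h]^{-1} = [g]^{-1}$, which witnesses the reversibility of $[g]$. I do not anticipate any real obstacle here: the lemma is essentially a bookkeeping translation that formalizes the two-fold lift ambiguity inherent in the projective quotient. Its value, as indicated in the introduction, lies in setting up a clean reduction of reversibility in $\mathrm{PSL}(n,\H)$ to two separate conjugacy equations in $\mathrm{SL}(n,\H)$, paving the way for the eventual proof of Theorem~\ref{thm-main-equiv-PSL(n,H)}.
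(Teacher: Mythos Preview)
Your argument is correct and is exactly the natural unwinding of the quotient map $\mathrm{SL}(n,\H)\to\mathrm{PSL}(n,\H)$ with kernel $\{\pm\mathrm{I}_n\}$. The paper does not actually supply a proof of this lemma---it merely states it with a cross-reference to \cite[Lemma~4.1]{DGL}---so your write-up fills in precisely what one would expect that reference to contain.
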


	Now, we will investigate the equation $g A  g^{-1}= -A^{-1}$, where $A, g \in \mathrm{SL}(n,\H) $.  In \cite{PaSa}, authors did such investigation for matrices over an arbitrary field $\F$, and they gave necessary and sufficient conditions for $A \in  \mathrm{GL}(n, \F)$ to be a product of an involution and a skew-involution; see \cite[Theorem 5]{PaSa}.
	The following result classify all  elements $A \in \mathrm{SL}(n,\H) $ such that  $A$ is conjugate to $-A^{-1}$ in $ \mathrm{SL}(n,\H) $, cf. \thmref{thm-rev-SL(n,H)}.
	
	\begin{theorem} \label{thm-rev-PSL-type-2} An element $A \in  {\rm SL }(n,\H)$ satisfies 
		$g A  g^{-1}= -A^{-1}$ for some $g \in \mathrm{SL}(n,\H) $ if and only if the Jordan blocks in the Jordan decomposition of $A$ can be partitioned into pairs $ \{ \mathrm{J}(\lambda, s),\mathrm{J}(-\lambda^{-1}, s)\} $ or singletons $\{\mathrm{J}(\ib, t  )\}$,  where $\lambda \in \C \setminus \{0\}$  with non-negative imaginary parts and $\lambda \neq \ib$.  
	\end{theorem}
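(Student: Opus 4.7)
The plan is to prove both implications by reducing to a Jordan canonical form comparison in $\mathrm{M}(n,\H)$ and analyzing the involution $[\lambda] \mapsto [-\lambda^{-1}]$ on similarity classes of quaternionic eigenvalues.

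For the forward direction, I would observe that if $gAg^{-1}=-A^{-1}$ for some $g \in \mathrm{SL}(n,\H)$, then $A$ and $-A^{-1}$ are similar in $\mathrm{GL}(n,\H)$ and hence share the same Jordan canonical form by \lemref{lem-Jordan-M(n,H)}. The key computation is that if $A$ has a Jordan block $\mathrm{J}(\lambda, m)$ with $\mathrm{Im}(\lambda) \geq 0$, then the corresponding block of $-A^{-1}$ in canonical form is $\mathrm{J}(-\lambda^{-1}, m)$; this uses that $\mathrm{Im}(\lambda) \geq 0$ forces $\mathrm{Im}(-\lambda^{-1}) \geq 0$, so $-\lambda^{-1}$ is already the canonical representative of its similarity class. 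Equating the Jordan block multisets of $A$ and $-A^{-1}$ forces $\{(\lambda_i, m_i)\}$ to be stable under the involution $(\lambda, m) \mapsto (-\lambda^{-1}, m)$. Its fixed points are those with $\lambda^2 = -1$, i.e., $\lambda = \ib$ under the non-negative imaginary part convention; the remaining blocks must pair off as $\{(\lambda, m), (-\lambda^{-1}, m)\}$ with $\lambda \neq \ib$, which is precisely the claimed partition.

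For the converse, I would construct the conjugator $g$ block by block and then assemble the pieces into a block-diagonal matrix. For a singleton $\mathrm{J}(\ib, t)$, a short computation using $\mathrm{J}(\ib, t) = \ib \mathrm{I}_t + N$ (with $N$ the nilpotent shift) shows that $-\mathrm{J}(\ib, t)^{-1}$ is upper triangular with $\ib$ on the diagonal and $-1$ on the first super-diagonal, hence by uniqueness of Jordan form it is similar to $\mathrm{J}(\ib, t)$ in $\mathrm{GL}(t, \H)$. For a pair $\{\mathrm{J}(\lambda, s), \mathrm{J}(-\lambda^{-1}, s)\}$ with $\lambda \neq \ib$, I would use the anti-block-diagonal ansatz $g_0 = \begin{psmallmatrix} 0 & g_1 \\ g_2 & 0 \end{psmallmatrix}$, which reduces the equation $g_0 A g_0^{-1} = -A^{-1}$ to $g_1 \mathrm{J}(-\lambda^{-1}, s) g_1^{-1} = -\mathrm{J}(\lambda, s)^{-1}$ and $g_2 \mathrm{J}(\lambda, s) g_2^{-1} = -\mathrm{J}(-\lambda^{-1}, s)^{-1}$; in each case both sides are single Jordan blocks of size $s$ sharing the same eigenvalue similarity class, so the required $g_1, g_2 \in \mathrm{GL}(s, \H)$ exist. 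After assembling the block conjugators, a positive real rescaling places $g$ in $\mathrm{SL}(n,\H)$.

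The main delicate point, as usual over $\H$, is the similarity-class bookkeeping in the forward direction: one must carefully verify the identification $\mathrm{J}(\lambda, m) \leftrightarrow \mathrm{J}(-\lambda^{-1}, m)$ at the level of canonical representatives with non-negative imaginary part, and confirm that $[\ib]$ is the unique fixed class of $[\lambda] \mapsto [-\lambda^{-1}]$. Once this is in place, the rest reduces to standard Jordan form manipulations, closely mirroring the arguments used earlier in the paper for the reversibility classification recalled in \thmref{thm-rev-SL(n,H)}.
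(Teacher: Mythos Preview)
Your proposal is correct and follows essentially the same approach as the paper: both arguments reduce the condition $gAg^{-1}=-A^{-1}$ to the equality of Jordan forms of $A$ and $-A^{-1}$ via \lemref{lem-Jordan-M(n,H)}, identify $\ib$ as the unique fixed point of $\lambda\mapsto -\lambda^{-1}$ among canonical representatives, and deduce the pairing/singleton structure (the paper phrases the fixed-point check via the similarity-class criterion $[a]=[b]\Leftrightarrow \mathrm{Re}(a)=\mathrm{Re}(b),\ |a|=|b|$, while you verify directly that $\mathrm{Im}(\lambda)\geq 0$ forces $\mathrm{Im}(-\lambda^{-1})\geq 0$; these are equivalent). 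The paper defers the remaining bookkeeping and the converse construction to the argument of \cite[Theorem~5.1]{GLM1}, which your block-by-block sketch reproduces.
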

	
	\begin{proof} 
		Note that  $g A  g^{-1}= -A^{-1}$ for some $g \in \mathrm{SL}(n,\H) $ if and only if $A$ and $-A^{-1}$  have the same Jordan form; see \lemref{lem-Jordan-M(n,H)}. Let $a \in \H \setminus \{0\}$. Recall that  $[a] =[-a^{-1}]$ if and only if  $\mathrm{Re}(a) = \mathrm{Re}(-a^{-1})$ and $|a|= |-a^{-1}|$; see \cite[Theorem 2.2.6 (5)]{rodman}. Now, equation  $|a|= |-a^{-1}|$ implies that $|a|=1$. This implies that $\mathrm{Re}(a) = \mathrm{Re}(-a^{-1}) = \mathrm{Re}(-\overline{a})$. Therefore, we have  $[a] =[a^{-1}]$ if and only if $\mathrm{Re}(a) =0$ and $|a|=1$. Hence, for  $\lambda \in \C \setminus \{0\}$  with non-negative imaginary parts,  we have
		$$[\lambda] =[-\lambda^{-1}]  \Leftrightarrow   \lambda =\ib.$$ The proof  now follows using a similar line of arguments as in \cite[Theorem 5.1]{GLM1}
	\end{proof}

	\begin{lemma}\label{lem-2-str-rev-PSL-type2}
		Let $A:= \mathrm{J}(\lambda, n) \oplus \mathrm{J}(-\lambda^{-1}, n)$ be the Jordan form in $ {\rm SL }(2n,\H)$, where $\lambda \in \C \setminus \{0\}$  with non-negative imaginary parts such that $ \lambda \neq \ib.$ Then there exists an involution  $g \in \mathrm{SL}(2n,\H) $  such that $g A  g^{-1}= -A^{-1}$.
	\end{lemma}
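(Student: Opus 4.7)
The plan is to construct an explicit involution $g \in \mathrm{SL}(2n, \H)$ of block anti-diagonal form $g = \begin{psmallmatrix} & X \\ X^{-1} & \end{psmallmatrix}$, which automatically satisfies $g^{2} = \mathrm{I}_{2n}$. Writing $B := \mathrm{J}(\lambda, n)$ and $C := \mathrm{J}(-\lambda^{-1}, n)$ so that $A = B \oplus C$, a direct block computation reduces the required identity $gAg^{-1} = -A^{-1}$ to the single equation $X C X^{-1} = -B^{-1}$, since the partner identity $X^{-1} B X = -C^{-1}$ follows by inverting both sides.

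To exhibit such an $X$, I would combine two conjugations coming from the apparatus already developed in the paper. First, \lemref{lem-rel-omega-complex}(\ref{cond-1-rel-omega-complex}) gives $\Omega(\lambda, n)\, \mathrm{J}(\lambda^{-1}, n)\, \Omega(\lambda, n)^{-1} = \mathrm{J}(\lambda, n)^{-1}$. Second, the diagonal matrix $D := \mathrm{diag}(1, -1, 1, \dots, (-1)^{n-1}) \in \mathrm{GL}(n, \C)$ satisfies $D\, \mathrm{J}(-\lambda^{-1}, n)\, D^{-1} = -\mathrm{J}(\lambda^{-1}, n)$. This holds because conjugation by $D$ sends the standard super-diagonal nilpotent $N$ to $-N$ (adjacent diagonal entries of $D$ have opposite signs), so
$$ D\,(-\lambda^{-1} \mathrm{I}_n + N)\, D^{-1} \,=\, -\lambda^{-1} \mathrm{I}_n - N \,=\, -\mathrm{J}(\lambda^{-1}, n). $$
Setting $X := \Omega(\lambda, n)\, D$ and composing the two identities yields
$$ X C X^{-1} \,=\, \Omega(\lambda, n)\, \bigl(-\mathrm{J}(\lambda^{-1}, n)\bigr)\, \Omega(\lambda, n)^{-1} \,=\, -\mathrm{J}(\lambda, n)^{-1} \,=\, -B^{-1}, $$
as required.

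Finally, to confirm $g \in \mathrm{SL}(2n, \H)$ rather than merely $\mathrm{GL}(2n, \H)$, observe that $g^{2} = \mathrm{I}_{2n}$ forces $\det(g)^{2} = 1$, and by \defref{def-det-H} the quaternionic determinant is always non-negative; hence $\det(g) = 1$. I do not expect a genuine obstacle: the only non-routine step is recognizing that the sign mismatch between $-\mathrm{J}(\lambda^{-1}, n)$ and $\mathrm{J}(-\lambda^{-1}, n)$ can be absorbed by a diagonal sign-twist $D$, after which the verification is just a composition of two conjugations already supplied by \lemref{lem-rel-omega-complex}.
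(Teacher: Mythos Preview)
Your proof is correct and follows essentially the same strategy as the paper: both arguments use the block anti-diagonal involution $g = \begin{psmallmatrix} & P \\ P^{-1} & \end{psmallmatrix}$ and reduce the claim to finding $P$ with $P\,\mathrm{J}(-\lambda^{-1},n)\,P^{-1} = -\mathrm{J}(\lambda,n)^{-1}$. The only difference is that the paper simply invokes the existence of such a $P$ from Jordan theory in $\mathrm{GL}(n,\C)$, whereas you construct it explicitly as $X = \Omega(\lambda,n)\,D$ using the sign-twist $D$ and \lemref{lem-rel-omega-complex}; this is a harmless (and arguably more informative) refinement of the same idea.
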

	\begin{proof}
		Write $A= \begin{psmallmatrix}
			\mathrm{J}(\lambda, n) &  \\
			&  \mathrm{J}(-\lambda^{-1}, n) 
		\end{psmallmatrix}  \in  {\SL}(2n,\H)$.
		Since  $	-  \Big(\mathrm{J}(\lambda, n)\Big)^{-1}$  and $\mathrm{J}(-\lambda^{-1}, n)$ are conjugate in  $ {\GL}(n,\C)$, there exists an element $P \in {\GL}(n,\C)$ such that 
		\begin{equation}
			P \,  \mathrm{J}(-\lambda^{-1}, n)  \, P^{-1}= -\Big(\mathrm{J}(\lambda, n)\Big)^{-1}.
		\end{equation}
		Consider $g =    \begin{psmallmatrix}
			& P \\
			P^{-1} &  
		\end{psmallmatrix} \in {\GL}(2n,\C) \subset {\SL}(2n,\H)$.   Then   $g$ is an involution in $ {\SL}(2n,\H)$; see \defref{def-det-H}. Moreover, $gAg^{-1} = -A^{-1}$ if and only if $P \,  \mathrm{J}(-\lambda^{-1}, n)  \, P^{-1}= -\Big(\mathrm{J}(\lambda, n)\Big)^{-1}.$  Hence, the proof follows.
	\end{proof}
	
	Note the following example.
	\begin{example}\label{examp-iota-skew}
		Let $A =  \mathrm{J}(\ib, 5  ) \in {\rm SL }(5,\H)$. Consider  $g= \begin{psmallmatrix}
			1 & -3\ib  & -3  & \ib &0 \\
			& -1 & 2\ib   & 1 & 0\\
			&  & 1 & -\ib  &0 \\
			& &  & -1 &0 \\
			&  &  &  & 1
		\end{psmallmatrix} $  in $ {\rm SL }(5,\H)$. Note that the following equations hold.
		$$gA =  \begin{psmallmatrix}
			1 & -3\ib  & -3  & \ib &0 \\
			& -1 & 2\ib   & 1 & 0\\
			&  & 1 & -\ib  &0 \\
			& &  & -1 &0 \\
			&  &  &  & 1
		\end{psmallmatrix}   \begin{psmallmatrix}
			\ib& 1  & 0  & 0 &0 \\
			& \ib& 1   & 0 & 0\\
			&  & \ib & 1  &0 \\
			& &  & \ib&1 \\
			&  &  &  &\ib
		\end{psmallmatrix}  =  \begin{psmallmatrix}
			\ib& 4 & -6 \ib & 4& \ib \\
			& -\ib& -3   & 3\ib & 1\\
			&  & \ib & 2  &-\ib \\
			& &  &- \ib&-1 \\
			&  &  &  &\ib
		\end{psmallmatrix}, \hbox{ and }   $$
		$$ A^{-1}g =  \begin{psmallmatrix}
			-	\ib& 1  & \ib  & -1 &-\ib \\
			& -	\ib& 1  & \ib  & -1\\
			&  & -	\ib& 1  & \ib \\
			& &  &  -	\ib& 1 \\
			&  &  &  & -\ib
		\end{psmallmatrix}  \begin{psmallmatrix}
			1 & -3\ib  & -3  & \ib &0 \\
			& -1 & 2\ib   & 1 & 0\\
			&  & 1 & -\ib  &0 \\
			& &  & -1 &0 \\
			&  &  &  & 1
		\end{psmallmatrix} =  \begin{psmallmatrix}
			-	\ib& -4 & 6 \ib & -4& -\ib \\
			& \ib& 3   & -3\ib & -1\\
			&  & -\ib & -2  &\ib \\
			& &  &\ib&1 \\
			&  &  &  &-\ib
		\end{psmallmatrix}.$$
		Moreover, $g^2 = \mathrm{I}_{5}.$ Therefore, $g$ is  an involution  in $ \mathrm{SL}(5,\H) $  such that $g A  g^{-1}= -A^{-1}$.
	\end{example}

	In the following lemma, we will generalize 	Example \ref{examp-iota-skew}.
	\begin{lemma}\label{lem-1-str-rev-PSL-type2}
		Let $A:= \mathrm{J}(\ib, n  )$ be the Jordan block in $ {\rm SL }(n,\H)$. Then there exists an involution  $g$ in $\mathrm{SL}(n,\H) $  such that $g A  g^{-1}= -A^{-1}$.
	\end{lemma}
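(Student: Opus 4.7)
The plan is to reduce this to the \emph{unipotent} case, for which \lemref{lem-rel-omega-complex} already supplies an involutive reversing element. Let $N$ denote the standard nilpotent Jordan block of size $n$ (ones on the superdiagonal, zeros elsewhere), so that $A = \ib\,\mathrm{I}_n + N$. Setting $U := \mathrm{I}_n - \ib N \in \mathrm{GL}(n,\C)$, the key algebraic observation is
\[
A = \ib U \qquad\text{and}\qquad -A^{-1} = \ib U^{-1},
\]
because $A^{-1} = (\ib U)^{-1} = U^{-1}\ib^{-1} = -\ib U^{-1}$. Consequently, producing an involution $g$ with $gAg^{-1} = -A^{-1}$ is equivalent to producing an involution $g$ with $gUg^{-1} = U^{-1}$.

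Next, I would diagonalize the nilpotent part via $D := \mathrm{diag}(1,\ib,\ib^{2},\ldots,\ib^{n-1}) \in \mathrm{GL}(n,\C)$. A direct computation gives $DND^{-1} = -\ib\,N$, hence $D\,\mathrm{J}(1,n)\,D^{-1} = \mathrm{I}_n - \ib N = U$. Thus $U$ is conjugate over $\C$ to the standard unipotent block $\mathrm{J}(1,n)$, and since conjugation preserves the relation $h^{2}=\mathrm{I}_n$, any involution reversing $\mathrm{J}(1,n)$ transports under $D$ to an involution reversing $U$. Now apply \lemref{lem-rel-omega-complex} with $\lambda = 1$: part (\ref{cond-2-rel-omega-complex}) gives $\Omega(1,n)^{-1} = \Omega(1,n)$, so $\Omega(1,n)$ is already an involution, and part (\ref{cond-1-rel-omega-complex}) gives $\Omega(1,n)\,\mathrm{J}(1,n)\,\Omega(1,n)^{-1} = \mathrm{J}(1,n)^{-1}$. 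Defining
\[
g \,:=\, D\,\Omega(1,n)\,D^{-1} \,\in\, \mathrm{GL}(n,\C),
\]
one immediately gets $g^{2} = D\,\Omega(1,n)^{2}\,D^{-1} = \mathrm{I}_n$ and
\[
g U g^{-1} \,=\, D\,\Omega(1,n)\,\mathrm{J}(1,n)\,\Omega(1,n)^{-1}\,D^{-1} \,=\, D\,\mathrm{J}(1,n)^{-1}\,D^{-1} \,=\, U^{-1},
\]
whence $gAg^{-1} = -A^{-1}$.

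To finish, I must verify $g \in \mathrm{SL}(n,\H)$. Inspecting \defref{def-special-matrix-omega} with $\lambda = 1$, the recursion $x_{i,j} = -x_{i+1,j}-x_{i+1,j+1}$ forces the diagonal of $\Omega(1,n)$ to alternate in $\{\pm 1\}$ starting from $x_{n,n}=1$, so $|\det_{\C}(\Omega(1,n))| = 1$ and hence $|\det_{\C}(g)| = 1$. Since $g$ lies in $\mathrm{GL}(n,\C)$, its quaternionic determinant equals $|\det_{\C}(g)|^{2} = 1$ by \defref{def-det-H}. No essential obstacle arises: the whole argument is a diagonal change of basis in $\mathrm{GL}(n,\C)$ that converts the problem to the unipotent case already treated by the $\Omega$-matrices. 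As a sanity check, specialising $g = D\,\Omega(1,n)\,D^{-1}$ at $n=5$ reproduces precisely the matrix displayed in Example~\ref{examp-iota-skew}.
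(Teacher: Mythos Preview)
Your argument is correct and is a genuinely different---and cleaner---route than the paper's. The paper constructs the involution $g$ explicitly via a recursion $x_{i,j} = \ib\,x_{i+1,j} - x_{i+1,j+1}$, expresses the entries in closed form as signed binomial coefficients times powers of $-\ib$, and then verifies both $gA = -A^{-1}g$ and $g^{2}=\mathrm{I}_n$ by direct matrix computation, the latter requiring Newton's identity and the alternating binomial sum identity. You instead observe that $A = \ib\,U$ with $U$ unipotent, so that for any $g \in \mathrm{GL}(n,\C)$ (which commutes with $\ib\,\mathrm{I}_n$) the condition $gAg^{-1}=-A^{-1}$ reduces to $gUg^{-1}=U^{-1}$; a diagonal conjugation then transforms $U$ into $\mathrm{J}(1,n)$, and the already-established involutivity and reversing property of $\Omega(1,n)$ from \lemref{lem-rel-omega-complex} finish the job. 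What your approach buys is that no new combinatorial identity is needed: the binomial computations are entirely absorbed into the prior lemma about $\Omega(\lambda,n)$. What the paper's approach buys is self-containment and an explicit formula for $g$ without reference to an auxiliary conjugation. One minor remark: your determinant verification is in fact unnecessary, since any involution in $\mathrm{GL}(n,\H)$ automatically lies in $\mathrm{SL}(n,\H)$ (the quaternionic determinant is non-negative real and squares to $1$), as noted in the paper's introduction.
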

	\begin{proof} Write $ A = [a_{i,j}]_{ n \times n} $ and $ A^{-1} = [b_{i,j}]_{ n \times n} $, where 
		\begin{equation*}
			a_{i,j} =  \begin{cases}
				\ib & \text{if $j=i$ }\\
				1 & \text{if $j =i+1$}\\
				0 & \text{otherwise}
			\end{cases}, \hbox{ and }  b_{i,j} =  \begin{cases}
				-\ib & \text{if $j=i$ }\\
				-  (\ib)^{j-i+1} & \text{if $j>i$}\\
				0 & \text{otherwise.}
			\end{cases}
		\end{equation*}
		
		Consider the upper triangular matrix 
		$g := [ x_{i,j} ]_{ n \times n}  \in   \mathrm{GL}(n,\C)$, where
		\begin{enumerate}
			\item $x_{i,j} = 0$ \quad for  all $1\leq i,j \leq n$ such that $i >j$,
			\item  $x_{i,n} = 0  $  \quad  for all  $1\leq i \leq n-1$,
			\item  $x_{n,n} = 1$,
			\item  for all $ 1\leq i \leq j \leq n-1 $,  define
			\begin{equation}\label{eq-special-matrix-iota-skew}
				x_{i,j} =   \ib x_{i+1,j} - x_{i+1,j+1}.
			\end{equation}
		\end{enumerate}
		For more clarity, we can  write $g = [ x_{i,j} ]_{ 1\leq i,  j \leq n}  \in   \mathrm{GL}(n,\C)$  explicitly as follows:
		\begin{equation}
			x_{i,j} =\begin{cases}
				0 & \text{if $j <i $} \\
				0 & \text{if $j =n,  i  \neq n$}\\
				(-1)^{n-i} & \text{if $j=i$ }\\
				(-1)^{n-i} \,  \binom{n-i-1}{j-i}  \,  (-\ib)^{j-i}& \text{if $ j>i, j  \neq n$ }
			\end{cases},
		\end{equation}
		where  $\binom{n-i-1}{j-i} $ denotes the binomial coefficient.
		Observe that for   all $1\leq i <j\leq n$, we have 
		\begin{equation}\label{eq-equivalent-special-matrix-iota-skew}
			x_{i,j-1}= (i)^2 x_{i+1,j} + (i)^3 x_{i+2,j} +  \dots + (i)^{j-i+1}  x_{j,j}
		\end{equation}
		
		To prove the lemma, it suffices to establish the following two claims:
		\begin{enumerate}
			\item 	\textit{Claim 1. $gA = -A^{-1}g$.}\\
			\textit{Proof.}
			Note that   for all $1\leq i \leq n$,  we have 
			$$(gA)_{i,i} = - ( A^{-1} g)_{i,i} = 	(-1)^{n-i} \ib = \ib x_{i,i} .$$
			Since matrices  under consideration are upper triangular, so it is enough to prove the following equality:
			$$
			(gA)_{i,j} = - ( A^{-1} g)_{i,j} =  x_{i,j-1} + \ib \,  x_{i,j} \hbox { for all } 1 \leq i < j\leq n. 
			$$
			To see this, note that  for $1 \leq i < j\leq n$, we have
			$$	(gA)_{i,j}  = \sum_{k=1}^{n} (x_{i,k}) \,  (a_{k,j}) = \sum_{k=i}^{j} (x_{i,k}) \,  (a_{k,j})= x_{i,j-1} a_{j-1,j} + x_{i,j} a_{j,j}=x_{i,j-1} + \ib \,  x_{i,j}.$$
			Further,  for $1 \leq i < j\leq n$,  we have
			\small	\begin{align*}
				( A^{-1} g)_{i,j} & = \sum_{k=1}^{n} (b_{i,k})  (x_{k,j}) = \sum_{k=i}^{j} (b_{i,k})   (x_{k,j}) = b_{i,i} x_{i,j} + b_{i,i+1} x_{i+1,j}+ \dots + b_{i,j} x_{j,j}
				\\&	=(-\ib x_{i,j}) - \Big((i)^2 x_{i+1,j} + (i)^3 x_{i+2,j} +  \dots + (i)^{j-i+1}  x_{j,j} \Big).
			\end{align*}
			Using  Equation \eqref{eq-equivalent-special-matrix-iota-skew},
			we get
			$$ 
			( A^{-1} g)_{i,j}   = (-\ib x_{i,j}) - (x_{i,j-1})= -(  x_{i,j-1} +\ib x_{i,j}) \hbox { for all } 1 \leq i < j\leq n. 
			$$
			Therefore, we have $gA = -A^{-1}g$.
			
			\item \textit{Claim 2. $g^2= \mathrm{I}_n$.}\\
			\textit{Proof.} Note that  we have $(g^2)_{i,i} =1$ for all $1 \leq i \leq n$. Moreover,  for all $1 \leq i < j \leq n$, we have  $$(g^2)_{i,j} =   \sum_{k=1}^{n} (x_{i,k}) \,  (x_{k,j}) = \sum_{k=i}^{j} (x_{i,k}) \,  (x_{k,j})  .$$ This  implies that  for all $1 \leq i < j \leq n$, we have
			$$(g^2)_{i,j} =    \sum_{k=i}^{j}  (-1)^{n-i}  \,  \binom{n-i-1}{k-i }  \, (-\ib)^{k-i}  \,  (-1)^{n-k} \,    \binom{n-k-1}{j-k }  \, (-\ib)^{j-k}.$$
			Therefore,  for all $1 \leq i < j \leq n$, we have
			\begin{equation}\label{eq-square-special-matrix-iota-skew}
				(g^2)_{i,j} =    (-\ib)^{j-i}  \,   \sum_{k=i}^{j}  (-1)^{(-i-k)}   \,  \binom{n-i-1}{k-i }   \,     \binom{n-k-1}{j-k }.
			\end{equation}
			By substituting $m = k-i$ in  Equation \eqref{eq-square-special-matrix-iota-skew}, we get
			$$
			(g^2)_{i,j} =   (-\ib)^{j-i} \,   \sum_{m=0}^{j-i} (-1)^{(-2i-m)}  \binom{n-i-1}{m } \binom{(n-i-1)-m}{(j-i) -m}.
			$$
			Now, recall the following well-known binomial identities concerning binomial coefficients from \cite[Section 1.2]{burton}:
			\begin{enumerate} 
				\item ~\label{eq-Newton-identity}  
				\textit{Newton's identity:} $\binom{n}{k} \binom{k}{r}= \binom{n}{r} \binom{n-r}{k-r}  $ for all $ 0\leq r \leq k \leq n$.
				\vspace{.1cm}
				\item ~\label{eq-binomial-alternate-sum}
				For $n\geq 1$,   $\sum_{k=0}^{n} (-1)^{k} \binom{n}{k}=0$.
			\end{enumerate}
			In view of the Newton's identity (\ref{eq-Newton-identity}) and identity (\ref{eq-binomial-alternate-sum}), we get
			$$ (g^2)_{i,j}  =  (-\ib)^{j-i}\,  \binom{n-i-1}{j-i } \,  \sum_{r=0}^{j-i} \, (-1)^r \,  \binom{j-i}{r } =0 \hbox { for all } 1\leq i < j \leq n.$$
			Therefore,  $(g^2)= \mathrm{I}_n$  in ${\rm GL }(n,\C)$.   
		\end{enumerate}
		Hence, the proof follows.
	\end{proof}
	
	\begin{theorem}\label{thm-rev-eq-str-rev-PSL-type2}
		Let $A \in \mathrm{SL}(n,\H) $. Then the following conditions are equivalent.
		\begin{enumerate}
			\item $g A  g^{-1}= -A^{-1}$ for some $g \in \mathrm{SL}(n,\H) $.
			\item  $h A  h^{-1}= -A^{-1}$ for some involution $h \in \mathrm{SL}(n,\H) $. 
		\end{enumerate}
	\end{theorem}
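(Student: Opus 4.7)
The direction $(2) \Rightarrow (1)$ is immediate, since any involution in $\mathrm{SL}(n,\H)$ is in particular an element of $\mathrm{SL}(n,\H)$. So the substantive content is $(1) \Rightarrow (2)$, and the plan is to invoke the classification already established in \thmref{thm-rev-PSL-type-2} together with the block-by-block constructions in \lemref{lem-2-str-rev-PSL-type2} and \lemref{lem-1-str-rev-PSL-type2}.

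Assuming (1) holds, \thmref{thm-rev-PSL-type-2} tells me that the Jordan blocks of $A$ partition into two kinds: pairs $\{\mathrm{J}(\lambda,s),\mathrm{J}(-\lambda^{-1},s)\}$ with $\lambda\in\C\setminus\{0\}$ of non-negative imaginary part and $\lambda\neq\ib$, and singletons $\{\mathrm{J}(\ib,t)\}$. By \lemref{lem-Jordan-M(n,H)} there is $S\in\mathrm{GL}(n,\H)$ so that $SAS^{-1}$ is the direct sum of these blocks, grouped as above. After rescaling $S$ we may assume $S\in\mathrm{SL}(n,\H)$. For each paired block of size $2s\times 2s$, \lemref{lem-2-str-rev-PSL-type2} produces an involution $h_\lambda\in\mathrm{SL}(2s,\H)$ with $h_\lambda(\mathrm{J}(\lambda,s)\oplus\mathrm{J}(-\lambda^{-1},s))h_\lambda^{-1}=-(\mathrm{J}(\lambda,s)\oplus\mathrm{J}(-\lambda^{-1},s))^{-1}$, and for each singleton block $\mathrm{J}(\ib,t)$, \lemref{lem-1-str-rev-PSL-type2} supplies an involution $h_{\ib,t}\in\mathrm{SL}(t,\H)$ with the analogous property.

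Form the block diagonal matrix $H$ obtained by assembling all the $h_\lambda$ and $h_{\ib,t}$ in the same order as the corresponding Jordan blocks of $SAS^{-1}$. Since each block is an involution, $H$ is an involution; since each block conjugates its corresponding Jordan block to the negative of its inverse, $H(SAS^{-1})H^{-1}=-(SAS^{-1})^{-1}$. Setting $h:=S^{-1}HS$, the element $h$ is an involution in $\mathrm{GL}(n,\H)$ satisfying $hAh^{-1}=-A^{-1}$. Finally, $h\in\mathrm{SL}(n,\H)$: as noted in the paper (following \cite[Theorem 5.9.2]{rodman}), every involution in $\mathrm{GL}(n,\H)$ automatically has determinant one, so this point requires no additional argument.

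The only step that is not pure bookkeeping is checking that the block-diagonal combination preserves both the involution property and the reversing identity, but both are immediate because conjugation, involutivity, and the relation $hAh^{-1}=-A^{-1}$ are all stable under taking block diagonal sums. Consequently the main technical work is entirely hidden inside \lemref{lem-2-str-rev-PSL-type2} and \lemref{lem-1-str-rev-PSL-type2}, and the theorem follows at once once those lemmas are in hand.
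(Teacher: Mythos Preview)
Your proposal is correct and follows essentially the same approach as the paper: the paper's proof simply states that the result follows from \thmref{thm-rev-PSL-type-2}, \lemref{lem-2-str-rev-PSL-type2}, and \lemref{lem-1-str-rev-PSL-type2}, and you have spelled out the block-diagonal assembly that makes this implication explicit.
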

	\begin{proof}
		The proof of theorem follows from \thmref{thm-rev-PSL-type-2}, \lemref{lem-2-str-rev-PSL-type2}  and \lemref{lem-1-str-rev-PSL-type2}.
	\end{proof}

	The following result generalizes \cite [Proposition 4.3]{DGL} for an arbitrary $n$.
	\begin{proposition}	\label{prop-rev-type-2-PSL-skew-inv-prod}
		Let $A \in \mathrm{SL}(n,\H) $ be an element such that $g A  g^{-1}= -A^{-1}$ for some $g \in \mathrm{SL}(n,\H) $. Then $A$ can be written as a product of an involution and a skew-involution in $\mathrm{SL}(n,\H) $.
	\end{proposition}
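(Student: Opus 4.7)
The plan is to use \thmref{thm-rev-eq-str-rev-PSL-type2} to upgrade the hypothesis from an arbitrary conjugator to an involutive one, and then build the desired decomposition directly from this involution. More precisely, first I would apply \thmref{thm-rev-eq-str-rev-PSL-type2} to obtain an involution $h \in \mathrm{SL}(n,\H)$ satisfying $hAh^{-1} = -A^{-1}$.

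Next, I propose the explicit factorization $A = \tau_1 \tau_2$, where $\tau_1 := h$ and $\tau_2 := hA$. Since $h^2 = \mathrm{I}_n$, the identity $\tau_1 \tau_2 = h \cdot hA = A$ is immediate. By hypothesis, $\tau_1 = h$ is an involution in $\mathrm{SL}(n,\H)$. For the skew-involution claim, I would compute $\tau_2^2 = (hA)(hA) = h(Ah)A$; the conjugation relation $hAh^{-1} = -A^{-1}$ combined with $h^{-1} = h$ gives $Ah = -hA^{-1}$, whence
\[
\tau_2^2 = h(-hA^{-1})A = -h^2 A^{-1}A = -\mathrm{I}_n,
\]
so $\tau_2$ is a skew-involution.

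Finally, I would confirm that $\tau_2 \in \mathrm{SL}(n,\H)$: since $h, A \in \mathrm{SL}(n,\H)$, the product $hA$ lies in $\mathrm{SL}(n,\H)$ as well. (Alternatively, any skew-involution in $\mathrm{GL}(n,\H)$ automatically lies in $\mathrm{SL}(n,\H)$, because applying the quaternionic determinant, which is a non-negative real number by \defref{def-det-H}, to $\tau_2^2 = -\mathrm{I}_n$ forces $\det(\tau_2) = 1$.) There is no real obstacle here — the content of the proposition is concentrated entirely in \thmref{thm-rev-eq-str-rev-PSL-type2}, and once the involutive conjugator $h$ is in hand, the decomposition $A = h \cdot (hA)$ is forced on us and the verification is a one-line calculation.
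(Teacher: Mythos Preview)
Your proposal is correct and follows essentially the same approach as the paper: both invoke \thmref{thm-rev-eq-str-rev-PSL-type2} to get an involutive conjugator $h$ and then write down an explicit factorization, with the paper choosing $A = (-h^{-1}A^{-1})\cdot h$ (skew-involution times involution) where you chose $A = h\cdot(hA)$ (involution times skew-involution). These are the same idea with a cosmetic difference in which factor is placed first.
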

	\begin{proof}
		In view of the \thmref{thm-rev-eq-str-rev-PSL-type2},  there exists an involution $h \in \mathrm{SL}(n,\H) $ such that $hA h^{-1}= -A^{-1}$. This implies that
		$$A = (- h^{-1}  A^{-1})   (h),$$
		where $(- h^{-1}  A^{-1})^2  = -\mathrm{I}_n$ and $h^{2}= \mathrm{I}_n$. This completes the proof.
	\end{proof}
	
	\subsection{Proof of \thmref{thm-main-equiv-PSL(n,H)}}
	Recall that every skew-involution in $\mathrm{SL}(n,\H) $ is an involution in $\mathrm{PSL}(n,\H) $.  
	The proof of theorem now  follows from \lemref{lem-rev-PSL-eq-conditions}, \thmref{thm-rev-SL-prod-inv-PSL},  and \propref{prop-rev-type-2-PSL-skew-inv-prod}.
	\qed

	\subsection*{Acknowledgment} 
	Gongopadhyay is partially supported by the SERB core research grant CRG/2022/003680. Lohan acknowledges the financial support from the IIT Kanpur Postdoctoral Fellowship. Maity is partially supported by the Seed Grant IISERBPR/RD/OO/2024/23.

\end{document}